\newcommand{\numberseries}{\bfseries}   
\newlength{\thmtopspace}                
\newlength{\thmbotspace}                
\newlength{\thmheadspace}               
\newlength{\thmindent}                  
\newtheoremstyle{fixed bf head,slanted body}
                {\thmtopspace}{\thmbotspace}{\slshape}
                {\thmindent}{\bfseries}{.}{\thmheadspace}
                {{\numberseries \thmnumber{#2\;}}\thmname{#1}\thmnote{ (#3)}}
\newtheoremstyle{variable bf head,slanted body}
                {\thmtopspace}{\thmbotspace}{\slshape}
                {\thmindent}{\bfseries}{.}{\thmheadspace}
                {{\numberseries \thmnumber{#2\;}}\thmname{#1}\thmnote{ #3}}
\newtheoremstyle{fixed bf head,upright body}
                {\thmtopspace}{\thmbotspace}{\upshape}
                {\thmindent}{\bfseries}{.}{\thmheadspace}
                {{\numberseries \thmnumber{#2\;}}\thmname{#1}\thmnote{ (#3)}}
\newtheoremstyle{numbered paragraph}
                {\thmtopspace}{\thmbotspace}{\upshape}
                {\thmindent}{\upshape}{}{\thmheadspace}
                {{\numberseries \thmnumber{#2.}}}
\theoremstyle{fixed bf head,slanted body}
\newtheorem{res}{}[section]
\newtheorem{thm}[res]{Theorem}          \newtheorem*{thm*}{Theorem}
\newtheorem{prp}[res]{Proposition}      \newtheorem*{prp*}{Proposition}
        \newtheorem*{cor*}{Corollary}
\newtheorem{lem}[res]{Lemma}            \newtheorem*{lem*}{Lemma}
\theoremstyle{variable bf head,slanted body}
     \newtheorem*{introthm*}{Theorem}
   \newtheorem*{introcor*}{Corollary}
\theoremstyle{fixed bf head,upright body}
            \newtheorem*{stp*}{Setup}
\newtheorem{dfn}[res]{Definition}       \newtheorem*{dfn*}{Definition}
     \newtheorem*{con*}{Construction}
\newtheorem{obs}[res]{Observation}      \newtheorem*{obs*}{Observation}
\newtheorem{rmk}[res]{Remark}           \newtheorem*{rmk*}{Remark}
\newtheorem{exa}[res]{Example}          \newtheorem*{exa*}{Example}
         \newtheorem*{qst*}{Question}
\theoremstyle{numbered paragraph}
\newtheorem{ipg}[res]{}
\newlength{\thmlistleft}        
\newlength{\thmlistright}       
\newlength{\thmlistpartopsep}   
\newlength{\thmlisttopsep}      
\newlength{\thmlistparsep}      
\newlength{\thmlistitemsep}     
\newcounter{eqc} 
  {\end{list}}%
\newcounter{prt}
\newenvironment{prt}{\begin{list}{\upshape (\alph{prt})}%
    {\usecounter{prt}%
      \setlength{\leftmargin}{\thmlistleft}%
      \setlength{\labelwidth}{\thmlistleft}%
      \setlength{\rightmargin}{\thmlistright}%
      \setlength{\partopsep}{\thmlistpartopsep}%
      \setlength{\topsep}{\thmlisttopsep}%
      \setlength{\parsep}{\thmlistparsep}%
      \setlength{\itemsep}{\thmlistitemsep}}}%
  {\end{list}}%
  \newcommand{\proofoftag}[2][:]{(#2)#1}
\newcounter{rqm}
  {\end{list}}%
\newenvironment{itemlist}{\nopagebreak \begin{list}{$\bullet$}%
    {\setlength{\leftmargin}{\thmlistleft}%
      \setlength{\labelwidth}{\thmlistleft}%
      \setlength{\rightmargin}{\thmlistright}%
      \setlength{\partopsep}{\thmlistpartopsep}%
      \setlength{\topsep}{\thmlisttopsep}%
      \setlength{\parsep}{\thmlistparsep}%
      \setlength{\itemsep}{\thmlistitemsep}}}%
  {\end{list}}%
\newcommand{\pgref}[1]{\ref{#1}}
\newcommand{\thmref}[2][Theorem~]{#1\pgref{thm:#2}}
\newcommand{\prpref}[2][Proposition~]{#1\pgref{prp:#2}}
\newcommand{\lemref}[2][Lemma~]{#1\pgref{lem:#2}}
\newcommand{\obsref}[2][Observation~]{#1\pgref{obs:#2}}
\newcommand{\dfnref}[2][Definition~]{#1\pgref{dfn:#2}}
\newcommand{\exaref}[2][Example~]{#1\pgref{exa:#2}}
\newcommand{\rmkref}[2][Remark~]{#1\pgref{rmk:#2}}
\newcommand{\secref}[2][Section~]{#1\ref{sec:#2}}
\newcommand{\appref}[2][Appendix~]{#1\ref{app:#2}}
\renewcommand{\eqref}[1]{(\pgref{eq:#1})}
\def\@nobreak@#1{\mathchoice%
  {\nobreakdef@\displaystyle\f@size{#1}}%
  {\nobreakdef@\nobreakstyle\tf@size{\firstchoice@false #1}}%
  {\nobreakdef@\nobreakstyle\sf@size{\firstchoice@false #1}}%
  {\nobreakdef@\nobreakstyle\ssf@size{\firstchoice@false #1}}%
  \check@mathfonts}%
\def\nobreakdef@#1#2#3{\hbox{{%
                    \everymath{#1}%
                    \let\f@size#2\selectfont%
                    #3}}}%
\DeclareFontFamily{T1}{cmex}{}
\DeclareFontShape{T1}{cmex}{m}{n}{<-> s * [0.89] cmex10}{}
\DeclareSymbolFont{cmlargesymbols}{T1}{cmex}{m}{n}
\DeclareMathSymbol{\mycoprod}{\mathop}{cmlargesymbols}{"60} 
\DeclareMathSymbol{\myprod}{\mathop}{cmlargesymbols}{"51} \let\prod\myprod
\DeclareSymbolFont{usualmathcal}{OMS}{cmsy}{m}{n}
\DeclareSymbolFontAlphabet{\mathcal}{usualmathcal}
\DeclareSymbolFont{letters}{OML}{txmi}{m}{it}
\DeclareMathSymbol{\alpha}{\mathord}{letters}{"0B}
\DeclareMathSymbol{\beta}{\mathord}{letters}{"0C}
\DeclareMathSymbol{\gamma}{\mathord}{letters}{"0D}
\DeclareMathSymbol{\delta}{\mathord}{letters}{"0E}
\DeclareMathSymbol{\epsilon}{\mathord}{letters}{"0F}
\DeclareMathSymbol{\zeta}{\mathord}{letters}{"10}
\DeclareMathSymbol{\eta}{\mathord}{letters}{"11}
\DeclareMathSymbol{\theta}{\mathord}{letters}{"12}
\DeclareMathSymbol{\iota}{\mathord}{letters}{"13}
\DeclareMathSymbol{\kappa}{\mathord}{letters}{"14}
\DeclareMathSymbol{\lambda}{\mathord}{letters}{"15}
\DeclareMathSymbol{\mu}{\mathord}{letters}{"16}
\DeclareMathSymbol{\nu}{\mathord}{letters}{"17}
\DeclareMathSymbol{\xi}{\mathord}{letters}{"18}
\DeclareMathSymbol{\pi}{\mathord}{letters}{"19}
\DeclareMathSymbol{\rho}{\mathord}{letters}{"1A}
\DeclareMathSymbol{\sigma}{\mathord}{letters}{"1B}
\DeclareMathSymbol{\tau}{\mathord}{letters}{"1C}
\DeclareMathSymbol{\upsilon}{\mathord}{letters}{"1D}
\DeclareMathSymbol{\phi}{\mathord}{letters}{"1E}
\DeclareMathSymbol{\chi}{\mathord}{letters}{"1F}
\DeclareMathSymbol{\psi}{\mathord}{letters}{"20}
\DeclareMathSymbol{\omega}{\mathord}{letters}{"21}
\DeclareMathSymbol{\varepsilon}{\mathord}{letters}{"22}
\DeclareMathSymbol{\vartheta}{\mathord}{letters}{"23}
\DeclareMathSymbol{\varpi}{\mathord}{letters}{"24}
\DeclareMathSymbol{\varrho}{\mathord}{letters}{"25}
\DeclareMathSymbol{\varsigma}{\mathord}{letters}{"26}
\DeclareMathSymbol{\varphi}{\mathord}{letters}{"27}
\DeclareMathSymbol{\Gamma}{\mathord}{letters}{"00}
\DeclareMathSymbol{\Delta}{\mathord}{letters}{"01}
\DeclareMathSymbol{\Theta}{\mathord}{letters}{"02}
\DeclareMathSymbol{\Lambda}{\mathord}{letters}{"03}
\DeclareMathSymbol{\Xi}{\mathord}{letters}{"04}
\DeclareMathSymbol{\Pi}{\mathord}{letters}{"05}
\DeclareMathSymbol{\Sigma}{\mathord}{letters}{"06}
\DeclareMathSymbol{\Upsilon}{\mathord}{letters}{"07}
\DeclareMathSymbol{\upPhi}{\mathord}{letters}{"08}
\DeclareMathSymbol{\Psi}{\mathord}{letters}{"09}
\DeclareMathSymbol{\Omega}{\mathord}{letters}{"0A}
\DeclareMathSymbol{\upGamma}{\mathalpha}{operators}{"00}
\DeclareMathSymbol{\upDelta}{\mathalpha}{operators}{"01}
\DeclareMathSymbol{\upTheta}{\mathalpha}{operators}{"02}
\DeclareMathSymbol{\upLambda}{\mathalpha}{operators}{"03}
\DeclareMathSymbol{\upXi}{\mathalpha}{operators}{"04}
\DeclareMathSymbol{\upPi}{\mathalpha}{operators}{"05}
\DeclareMathSymbol{\upSigma}{\mathalpha}{operators}{"06}
\DeclareMathSymbol{\upUpsilon}{\mathalpha}{operators}{"07}
\DeclareMathSymbol{\upPhi}{\mathalpha}{operators}{"08}
\DeclareMathSymbol{\upPsi}{\mathalpha}{operators}{"09}
\DeclareMathSymbol{\upOmega}{\mathalpha}{operators}{"0A}
\renewcommand{\c}[1]{\mathcal{#1}}
\newcommand{\ira}[2][\cong]{\mspace{4mu}\smash{\stackrel{\text{\raisebox{2.3pt}{$#1$}}}{\smash{#2}}}\mspace{4mu}}
\newcommand{\ASpec}[1]{\mathrm{ASpec}\mspace{2mu}#1}
\newcommand{\ASpecp}[1]{\mathrm{ASpec}(#1)}
\newcommand{\ASupp}[1]{\mathrm{ASupp}\mspace{2mu}#1}
\newcommand{\Spec}[1]{\mathrm{Spec}\mspace{3mu}#1}
\renewcommand{\a}[1]{\langle#1\rangle}
\renewcommand{\aa}[1]{\left\langle#1\right\rangle}
\newcommand{\Mod}[1]{#1\textnormal{-Mod}}
\newcommand{\fgMod}[1]{#1\textnormal{-mod}}
\newcommand{\Ch}[1]{\mathrm{Ch}\mspace{2mu}#1}
\newcommand{\com}[2]{(#1 \!\downarrow\! #2)}
\newcommand{\Rep}[2]{\operatorname{Rep}(#1,#2)}
\newcommand{\Hom}[3]{\operatorname{Hom}_{#1}(#2,#3)}
\newcommand{\Ker}[1]{\operatorname{Ker}#1}
\renewcommand{\Im}[1]{\operatorname{Im}#1}
\newcommand{\EssIm}[1]{\operatorname{Ess.Im}#1}
\begin{document}

\title{Computations of atom spectra}

\author{Rune Harder Bak \ }
\address{(R.H.B.) Department of Mathematical Sciences, Universitetsparken 5, University of Co\-penhagen, 2100 Copenhagen {\O}, Denmark} 
\email{bak@math.ku.dk}

\author{ \ Henrik Holm}
\address{(H.H.) Department of Mathematical Sciences, Universitetsparken 5, University of Co\-penhagen, 2100 Copenhagen {\O}, Denmark} 
\email{holm@math.ku.dk}
\urladdr{http://www.math.ku.dk/\~{}holm/}


\keywords{Atom spectrum; comma category; quiver with relations; representation of quiver.}

\subjclass[2010]{16G20; 18E10}



\begin{abstract}
  This is a contribution to the theory of atoms in abelian categories recently developed in a series of papers by Kanda. We present a method that enables one to explicitly compute the atom spectrum of the module category over a wide range of non-commutative rings. We illustrate our method and results by several examples.
\end{abstract}

\maketitle


\section{Introduction}
\label{sec:Introduction}

Building on works of Storrer \cite{Sto72}, Kanda has, in a recent series of papers \cite{Kanda-Serre, Kanda-Extension, Kanda-Specialization-orders}, developed the theory of atoms in abelian categories. The fundamental idea is to assign to every abelian category $\c{A}$ the \emph{atom spectrum}, denoted by $\ASpec{\c{A}}$, in such a way that when $\Bbbk$ is a commutative ring, then $\ASpecp{\Mod{\Bbbk}}$ recovers the prime ideal spectrum $\Spec{\Bbbk}$. In \secref{Kanda} we recall a few basic definitions and facts from Kanda's theory.

Strong evidence suggests that Kanda's atom spectrum really is the ``correct'', and a very interesting, generalization of the prime ideal spectrum to abstract abelian categories. For example, in \cite[Thm.~5.9]{Kanda-Serre} it is proved that for any locally noetherian Grothendieck category $\c{A}$ there is a bijective correspondance between $\ASpec{\c{A}}$ and isomorphism
classes of indecomposable injective objects in $\c{A}$. This is a generalization of Matlis' bijective~corre\-spondance between $\Spec{\Bbbk}$ and the set of isomorphism classes of indecomposable injective $\Bbbk$-modules over a commutative noetherian ring $\Bbbk$; see \cite{EMt58}. Furthermore, in \cite[~Thm.~5.5]{Kanda-Serre} it is shown that there are bijective correspondances between open subsets of $\ASpec{\c{A}}$, Serre subcategories of $\mathrm{noeth}\mspace{2mu}\c{A}$, and localizing subcategories of $\c{A}$. This generalizes Ga\-bri\-el's bijective correspondances \cite{PGb62} between specialization-closed subsets of $\Spec{\Bbbk}$, Serre subcategories of $\fgMod{\Bbbk}$, and localizing subcategories of $\Mod{\Bbbk}$ for a commutative noetherian ring $\Bbbk$. From a theoretical viewpoint, these results are very appealing, however, in the literature it seems that little effort has been put into actually computing the atom spectrum.

In this paper, we add value to the results mentioned above, and to other related results, by explicitly computing/describing the atom spectrum---not just as a set, but as a partially ordered set and as a topological space---of a wide range of abelian \mbox{categories}. Our main technical result, \thmref{bijective}, shows that if \mbox{$F_{\mspace{-1mu}i} \colon \c{A}_i \to \c{B}$} $(i \in I)$ is a family of functors between abelian categories satisfying suitable assumptions, then there is a homeomorphism and an order-isomorphism $f \colon \textstyle\bigsqcup_{i \in I}\,\ASpec{\c{A}_i} \to \ASpec{\c{B}}$. Hence, if all the atom spectra $\ASpec{\c{A}_i}$ are known, then so is $\ASpec{\c{B}}$. One special case of this result is:

\begin{introthm*}[A]
    Let $(Q,\c{R})$ be a quiver with admissible relations and finitely many~ver\-ti\-ces. Let $\Bbbk Q$ be the path algebra of $Q$ and consider the two-sided ideal $I=(\c{R})$ in $\Bbbk Q$ generated by $\c{R}$. There is an injective, continuous, open, and order-preserving map,
\begin{displaymath}  
  \tilde{f} \colon \textstyle\bigsqcup_{i \in Q_0} \Spec{\Bbbk} \longrightarrow \ASpecp{\Mod{\Bbbk Q/I}}\;,
\end{displaymath}  
given by $\textnormal{($i^\mathrm{th}$ copy of $\Spec{\Bbbk}$) $\ni \mathfrak{p}$} 
  \,\longmapsto \a{\mspace{2mu}\Bbbk Q/\mspace{2mu}\tilde{\mathfrak{p}}(i)\mspace{1mu}}$. If, in addition, $(Q,\c{R})$ is right rooted, then $\tilde{f}$ is also surjective, and hence it is a homeomorphism and an order-isomorphism.
\end{introthm*}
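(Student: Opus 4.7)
The plan is to obtain Theorem~A as a direct application of \thmref{bijective} to a canonical family of functors indexed by the vertices of $Q$, after first identifying the atom spectrum of $\Mod{\Bbbk}$ with $\Spec{\Bbbk}$.

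For each vertex $i\in Q_0$, define the functor $F_i\colon \Mod{\Bbbk}\to\Mod{\Bbbk Q/I}$ that sends a $\Bbbk$-module $M$ to the representation which equals $M$ at vertex $i$, is $0$ at every other vertex, and has the zero map on every arrow. Since $(Q,\c{R})$ has admissible relations, this is a well-defined $\Bbbk Q/I$-module concentrated at $i$, and $F_i$ is fully faithful and exact with essential image the Serre subcategory of representations supported on $\{i\}$. By Kanda's recovery theorem for commutative rings, there is a homeomorphism and order-isomorphism $\ASpecp{\Mod{\Bbbk}}\cong\Spec{\Bbbk}$ given by $\a{\Bbbk/\mathfrak{p}}\leftrightarrow\mathfrak{p}$, and under this identification the induced map on atom spectra sends $\mathfrak{p}$ to the atom represented by $F_i(\Bbbk/\mathfrak{p})$, which is exactly $\a{\Bbbk Q/\tilde{\mathfrak{p}}(i)}$ for the two-sided ideal $\tilde{\mathfrak{p}}(i)$ in $\Bbbk Q/I$ corresponding to $\mathfrak{p}$ at vertex $i$.

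The first step of the proof is therefore to verify that the family $\{F_i\}_{i\in Q_0}$ satisfies the hypotheses of \thmref{bijective}. The essential images of different $F_i$'s meet only in the zero object, and any morphism in $\Mod{\Bbbk Q/I}$ between $F_i(M)$ and $F_j(N)$ with $i\neq j$ must be zero (since the representations are supported at different vertices and the arrows act as zero). With this verified, \thmref{bijective} supplies the injective, continuous, open, and order-preserving map $\tilde{f}$, proving the first assertion.

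For the second assertion, assume that $(Q,\c{R})$ is right rooted. To show $\tilde{f}$ is surjective, one must show that every atom $\a{M}\in\ASpecp{\Mod{\Bbbk Q/I}}$ is equivalent to some $\a{\Bbbk Q/\tilde{\mathfrak{p}}(i)}$. Since $M$ is monoform, one may replace $M$ by a suitable nonzero subfactor without changing its atom class; the strategy is to produce such a subfactor that is concentrated at a single vertex. This is where the right-rooted hypothesis enters: right rootedness allows transfinite induction on the vertices of $Q$ (using a filtration of $Q_0$ by subsets in which every vertex has all its outgoing arrows inside the subset), and at a \emph{sink-like} vertex $i$ (a vertex for which all arrows out of $i$ that matter have target in an already-handled part) the evaluation functor forces the existence of a nonzero subrepresentation of $M$ supported at $i$. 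That subrepresentation is in the essential image of $F_i$, so by monoformity of $M$ its atom class lies in the image of $\tilde{f}$.

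The main obstacle is the surjectivity argument. The formal content is to turn the intuition ``every representation of a right-rooted quiver admits a nonzero subrepresentation at a terminal vertex'' into a statement about atoms, handling the admissible relations $I=(\c{R})$ and the possibly transfinite nature of the induction correctly. Once this is in place, the openness and order-preservation provided by \thmref{bijective} upgrade the bijection $\tilde{f}$ to a homeomorphism and order-isomorphism.
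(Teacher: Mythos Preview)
Your first part is essentially correct and mirrors the paper: the stalk functors $F_i$ (the paper calls them $S_i$) are full, faithful, exact, and have essential image closed under subobjects, and their images for different vertices intersect only in zero. This verifies the hypotheses of \prpref{f} and condition~(a) of \thmref{bijective}, giving the injective, continuous, open, order-preserving map. The identification $F_i(\Bbbk/\mathfrak{p})\cong \Bbbk Q/\tilde{\mathfrak{p}}(i)$ is what the paper checks explicitly (note that $\tilde{\mathfrak{p}}(i)$ is an ideal of $\Bbbk Q$, not of $\Bbbk Q/I$).

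The surjectivity argument, however, has a genuine gap. Your plan relies on a filtration of $Q_0$ by ``sink-like'' subsets and a transfinite induction. This is the picture for right-rooted quivers \emph{without} relations in the sense of Enochs--Estrada--Garc\'{\i}a~Rozas, but it is not what right-rootedness means here. In \dfnref{right-rooted} the condition is that every infinite path of arrows eventually lands in the ideal $(\c{R})$; the underlying quiver may very well have no sinks at all. Concretely, take the Jordan quiver (one vertex, one loop $x$) with relation $x^m$, or more generally the quiver with one vertex, $n$ loops, and relations $(x_1,\dots,x_n)^m$ as in \exaref{free-algebra}. These are right rooted, yet $Q_0$ has a single vertex with outgoing arrows, so no nontrivial filtration of $Q_0$ of the kind you describe exists, and your induction never gets started. (The mention of \emph{transfinite} induction is also puzzling, since Theorem~A assumes $Q_0$ finite.)

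The paper's route to surjectivity is different and is exactly what condition~(b) of \thmref{bijective} asks for: each $S_i$ has a right adjoint $K_i$ given by $K_iX=\bigcap_{a\colon i\to j}\Ker{X(a)}$, and \lemref{b} shows that if $X\neq 0$ and $K_iX=0$ for all $i$, one can build an infinite sequence of arrows $a_1,a_2,\dots$ with $X(a_n\cdots a_1)\neq 0$ for every $n$, contradicting right-rootedness. Thus some $K_iX$ is a nonzero subobject of $X$ concentrated at vertex $i$, and surjectivity follows. The key point you are missing is that the ``sink'' is found not in the quiver but in the representation: one chases nonvanishing along arrows until the relations force a stop.
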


We prove Theorem~A in \secref{Application-quiver}, where we also give the definitions of admisible~relations (\dfnref[]{admissible}), right-rooted quivers (\dfnref[]{right-rooted}), and of the ideals $\tilde{\mathfrak{p}}(i)$ (\dfnref[]{p-tilde}). In the terminology of Kanda \cite[Def.~6.1]{Kanda-Serre}, Theorem~A yields that each $\tilde{\mathfrak{p}}(i)/I$, where $\mathfrak{p}$ is a prime ideal in $\Bbbk$ and $i$ is a vertex in $Q$, is a \emph{comonoform} left ideal in the ring $\Bbbk Q/I$ (and, in the case where $(Q,\c{R})$ is right rooted, these comonoform ideals represent all the atoms of $\Mod{\Bbbk Q/I}$).

Theorem~A applies e.g.~to show that for every $n,m \geqslant 1$ the map
\begin{displaymath}  
  \Spec{\Bbbk} \longrightarrow \ASpecp{\Mod{\Bbbk\langle x_1,\ldots,x_n \rangle/(x_1,\ldots,x_n)^m}}
  \quad \text{ given by } \quad \mathfrak{p} \longmapsto \a{\Bbbk/\mathfrak{p}}
\end{displaymath} 
is a homeomorphism and an order-isomorphism; see \exaref{free-algebra}. Actually, Theorem~A is a special case of \thmref{quiver} which yields a homeomorphism and an order-isomorphism $\ASpecp{\Rep{(Q,\c{R})}{\c{A}}} \cong \bigsqcup_{i \in Q_0} \ASpec{\c{A}}$ for every right rooted quiver $(Q,\c{R})$ with admissible relations ($Q$ may have infinitely many vertices) and any $\Bbbk$-linear abelian category $\c{A}$. From this stronger result one gets e.g.~$\ASpecp{\Ch{\c{A}}} \cong \bigsqcup_{i \in \mathbb{Z}}\, \ASpec{\c{A}}$; see \exaref{Ch}.

In \secref{Comma} we apply the previously mentioned (technical/abstract) \thmref{bijective} to compute the atom spectrum of comma categories:

\begin{introthm*}[B]
  Let \smash{$\c{A} \xrightarrow{\ U \ } \c{C} \xleftarrow{\ V \ } \c{B}$} be functors between abelian categories, where $U$ has a right adjoint and $V$ is left exact. Let $\com{U}{V}$ be the associated comma category. There is a homeomorphism and an order-isomorphism,
\begin{displaymath}  
  f \colon \ASpec{\c{A}} \,\sqcup\, \ASpec{\c{B}} \stackrel{\sim}{\longrightarrow} \ASpec{\com{U}{V}}\;,
\end{displaymath}  
given by $\a{H} \longmapsto \a{S_{\mspace{-5mu}\c{A}}\mspace{1mu}H}$ for $\a{H} \in \ASpec{\c{A}}$ and $\a{H} \longmapsto \a{S_{\mspace{-4mu}\c{B}}\mspace{1mu}H}$ for $\a{H} \in \ASpec{\c{B}}$.
\end{introthm*}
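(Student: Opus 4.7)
The strategy is to apply \thmref{bijective} to the family of section functors $S_{\c{A}}\colon \c{A}\to \com{U}{V}$ and $S_{\c{B}}\colon \c{B}\to \com{U}{V}$ defined by $S_{\c{A}}(A)=(A,0,0)$ and $S_{\c{B}}(B)=(0,B,0)$. These are well-defined since $U(0)=0$ (left adjoints preserve the initial object) and $V(0)=0$ ($V$ is left exact). Under the standing hypotheses, $\com{U}{V}$ is abelian, with kernels computed as $(\ker f,\ker g,\tilde\phi)$ using the left exactness of $V$, and cokernels dually using that $U$ preserves cokernels. A direct inspection of morphism sets in the comma category shows that $S_{\c{A}}$ and $S_{\c{B}}$ are exact and fully faithful, and that the forgetful projections $p_{\c{A}},p_{\c{B}}$ furnish adjunctions $p_{\c{A}}\dashv S_{\c{A}}$ and $S_{\c{B}}\dashv p_{\c{B}}$.

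Two structural facts then drive the argument. First, every object $X=(A,B,\phi)$ of $\com{U}{V}$ sits in a natural short exact sequence
\begin{equation*}
 0 \longrightarrow S_{\c{B}}\mspace{1mu}p_{\c{B}}(X) \longrightarrow X \longrightarrow S_{\c{A}}\mspace{1mu}p_{\c{A}}(X) \longrightarrow 0,
\end{equation*}
arising from the unit/counit maps of the two adjunctions. Second, an arrow $(A,0,0)\to(0,B,0)$ or $(0,B,0)\to(A,0,0)$ is forced to have both components zero, so $\operatorname{Hom}(S_{\c{A}}(A),S_{\c{B}}(B))=0$ and $\operatorname{Hom}(S_{\c{B}}(B),S_{\c{A}}(A))=0$. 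Combined, these say that $\com{U}{V}$ is assembled from the essential images of $S_{\c{A}}$ and $S_{\c{B}}$ by extensions, and that those two images meet only in the zero object.

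Feeding $\{S_{\c{A}},S_{\c{B}}\}$ into \thmref{bijective} then produces the map $f$ of the statement and certifies it as a homeomorphism and order-isomorphism. Injectivity is visible on atoms: any common nonzero subobject of $S_{\c{A}}(H)$ and $S_{\c{B}}(H')$ would need to have both components zero by the orthogonality above. Surjectivity is the heart of the matter: for a monoform $X=(A,B,\phi)$, if $B=0$ then $X=S_{\c{A}}(A)$ with $A$ necessarily monoform, while if $B\ne 0$ then the canonical inclusion $S_{\c{B}}(B)\hookrightarrow X$ is a nonzero subobject, which by the monoform property of $X$ forces $\a{X}=\a{S_{\c{B}}(B)}$ and thereby reduces the atom to one coming from $\c{B}$.

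The principal technical obstacle I anticipate is matching the precise hypotheses of \thmref{bijective}, in particular whatever atom-level ``partition'' condition it imposes on the family of functors; the canonical short exact sequence and the vanishing of the cross-morphisms established above are the natural inputs that will feed into that verification, but translating ``monoform $X$'' into ``monoform representative in the essential image of $S_{\c{A}}$ or $S_{\c{B}}$'' will require a careful passage from $B$ to a monoform subobject $B'\subseteq B$ (and symmetrically for $\c{A}$) in order to obtain an actual atom representative rather than merely an atom-equivalent object.
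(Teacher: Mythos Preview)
Your overall strategy coincides with the paper's: apply \thmref{bijective} to the stalk functors $S_{\c{A}}$ and $S_{\c{B}}$, after checking that they are full, faithful, exact, and lift subobjects (you omit the last explicitly, but it is immediate from the componentwise description of monomorphisms in $\com{U}{V}$). The injectivity argument via condition~(a) is the same as the paper's.

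Surjectivity is where you and the paper part ways. The paper verifies condition~(b) of \thmref{bijective} literally, by exhibiting \emph{right} adjoints of both stalk functors: $K_{\c{B}}=p_{\c{B}}$, and $K_{\c{A}}(X,Y,\theta)=\Ker(U^!\theta\circ\eta_X)$ built from the assumed right adjoint $U^!$ of $U$ (\lemref{K}). Your adjunction $p_{\c{A}}\dashv S_{\c{A}}$ is correct but furnishes only a \emph{left} adjoint of $S_{\c{A}}$, so it cannot be fed into condition~(b). However, your direct argument via the canonical short exact sequence $0\to S_{\c{B}}(B)\to X\to S_{\c{A}}(A)\to 0$ is a genuine and cleaner alternative: for a monoform $X=(A,B,\theta)$, either $B=0$ and then $X=S_{\c{A}}(A)$ on the nose, or $B\neq 0$ and $S_{\c{B}}(B)\hookrightarrow X$ is a nonzero subobject of a monoform object, hence itself monoform, hence $B$ is monoform by the biconditional~\eqref{H-FH-monoform} established in the proof of \prpref{ASpecF}, giving $\a{X}=\a{S_{\c{B}} B}=f(\a{B})$. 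The passage to a monoform subobject $B'\subseteq B$ that you anticipate needing is therefore unnecessary---$B$ itself is already monoform---so your ``principal technical obstacle'' dissolves. As a bonus, your route never touches $U^!$ beyond using that $U$ preserves cokernels, so it in fact proves the theorem under the weaker hypothesis that $U$ is merely right exact.
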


Theorem~B applies e.g.~to show that for the non-commutative ring
\begin{displaymath}
  T = 
  \begin{pmatrix}
  A & 0 \\
  M & B
  \end{pmatrix},
\end{displaymath}
where $A$ and $B$ are commutative rings and $M = {}_BM_A$ is a $(B,A)$-bimodule, there is a homeomorphism and an order-isomorphism $\Spec{A} \,\sqcup\, \Spec{B} \longrightarrow \ASpecp{\Mod{T}}$ given by
\begin{displaymath}
  \Spec{A} \ni \mathfrak{p} \,\longmapsto\, \aa
  {T\!\!\bigg/\!\!
  \begin{pmatrix}
    \mathfrak{p} & 0 \\
    M & B
  \end{pmatrix}}
  \quad \textnormal{ and } \quad
  \Spec{B} \ni \mathfrak{q} \,\longmapsto\, \aa{T\!\!\bigg/\!\!
  \begin{pmatrix}
    A & 0 \\
    M & \mathfrak{q}
  \end{pmatrix}};
\end{displaymath}  
see~\exaref{T-Mod} for details.

We end the paper with \appref{Quiver} where we present some background material on representations of quivers with relations that is needed, and taken for granted, in \secref{Application-quiver}.

\section{Kanda's theory of atoms}
\label{sec:Kanda}

We recall a few definitions and results from Kanda's theory \cite{Kanda-Serre, Kanda-Extension, Kanda-Specialization-orders} of atoms.

\begin{ipg}
  \label{Aspec}
  Let $\c{A}$ be an abelian category. An object $H \in \c{A}$ is called \emph{monoform} if $H \neq 0$ and for every non-zero subobject $N \rightarrowtail H$ there exists no common non-zero subobject of $H$~and~$H/N$, i.e.~if there exist monomorphisms \mbox{$H \leftarrowtail X \rightarrowtail H/N$} in $\c{A}$, then $X=0$. See \cite[Def.~2.1]{Kanda-Serre}.
  
Two monoform objects $H$ and $H'$ in $\c{A}$ are said to be \emph{atom equivalent} if there exists a common non-zero subobject of $H$ and $H'$. Atom equivalence is an equivalence relation on the collection of monoform objects; the equivalence class of a monoform object $H$ is denoted by $\a{H}$ and is called an \emph{atom} in $\c{A}$. The collection of all atoms in $\c{A}$ is called the \emph{atom spectrum} of $\c{A}$ and denoted by $\ASpec{\c{A}}$. See \cite[Def.~2.7, Prop.~2.8, and Def.~3.1]{Kanda-Serre}.
\end{ipg}

The atom spectrum of an abelian category comes equipped with a topology and a partial order which we now explain.

\begin{ipg}
  \label{topology}
  The \emph{atom support} of an object $M \in \c{A}$ is defined in \cite[Def.~3.2]{Kanda-Serre} and is given by
  \begin{displaymath}
    \ASupp{M} = \left\{ \a{H} \in \ASpec{\c{A}} \ \left|\!\!
      \begin{array}{l}
       \textnormal{$H$ is a monoform object such that} \\
       \textnormal{$H \cong L/L'$ for some $L' \subseteq L \subseteq M$}
    \end{array}
    \right.\!\!\!
    \right\}.
  \end{displaymath}

A subset $\upPhi \subseteq \ASpec{\c{A}}$ is said to be \emph{open} if for every $\a{H} \in \upPhi$ there exists $H' \in \a{H}$ such that $\ASupp{H'} \subseteq \upPhi$. The collection of open subsets defines a topology, called the~\emph{localization topology}, on $\ASpec{\c{A}}$, see \cite[Def.~3.7 and Prop.~3.8]{Kanda-Serre}, and the collection
\begin{displaymath}
  \{ \ASupp{M} \ |\, M \in \c{A} \}
\end{displaymath}
is an open basis of this topology; see \cite[Prop.~3.2]{Kanda-Specialization-orders}.
\end{ipg}

\begin{ipg}
  \label{order}
  The topological space $\ASpec{\c{A}}$ is a so-called \emph{Kolmogorov space} (or a \emph{$T_0$-space}), see \cite[Prop.~3.5]{Kanda-Specialization-orders}, and any such space $X$ can be equipped with a canonical partial order $\leqslant$, called the \emph{specialization order}, where $x \leqslant y$ means that $x \in \overline{\{y\}}$ (the closure of $\{y\}$ in $X$). This partial order on $\ASpec{\c{A}}$ is more explicitly described in \cite[Prop.~4.2]{Kanda-Specialization-orders}.
\end{ipg}

\begin{lem}
  \label{lem:order-preserving}
  Let $X$ and $Y$ be Kolmogorov spaces equipped with their specialization orders. Any continuous map $f \colon X \to Y$ is automatically order-preserving.
\end{lem}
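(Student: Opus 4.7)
The plan is to unwind the definition of the specialization order and apply the standard fact that continuous maps send closures into closures. Concretely, assume $x_1 \leqslant x_2$ in $X$, meaning $x_1 \in \overline{\{x_2\}}$. Since $f$ is continuous, the preimage of any closed set is closed, from which one deduces the well-known containment $f(\overline{A}) \subseteq \overline{f(A)}$ for any subset $A \subseteq X$. Applying this with $A = \{x_2\}$ gives $f(\overline{\{x_2\}}) \subseteq \overline{\{f(x_2)\}}$, so $f(x_1) \in \overline{\{f(x_2)\}}$, i.e.\ $f(x_1) \leqslant f(x_2)$ in $Y$.

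The only thing worth spelling out is the inclusion $f(\overline{A}) \subseteq \overline{f(A)}$: take any $y = f(x)$ with $x \in \overline{A}$; since $\overline{f(A)}$ is closed and $f$ is continuous, $f^{-1}(\overline{f(A)})$ is a closed set in $X$ containing $A$, hence containing $\overline{A}$, so $x \in f^{-1}(\overline{f(A)})$ and therefore $y = f(x) \in \overline{f(A)}$.

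There is no real obstacle here; the Kolmogorov hypothesis is only needed to ensure that the specialization preorders on $X$ and $Y$ are genuine partial orders (antisymmetric), and it plays no role in the order-preservation itself. The entire argument is a couple of lines.
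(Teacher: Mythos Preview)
Your proof is correct and follows exactly the same approach as the paper: unwind the specialization order and use the inclusion $f(\overline{\{x_2\}}) \subseteq \overline{\{f(x_2)\}}$ guaranteed by continuity. The paper simply cites this inclusion without justification, whereas you spell it out, but otherwise the arguments are identical.
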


\begin{proof}
  Assume that $x \leqslant y$ in $X$, that is, $x \in \overline{\{y\}}$. Then $f(x) \in f(\overline{\{y\}}) \subseteq \overline{f(\{y\})} = \overline{\{f(y)\}} $, where the inclusion holds as $f$ is continuous, and thus $f(x) \leqslant f(y)$ in $Y$.
\end{proof}

\begin{ipg}
  \label{Spec}
  For a commutative ring $\Bbbk$, its prime ideal spectrum coincides with the 
  atom spec\-trum of its module category in the following sense: By \cite[Props.~6.2, 7.1, and 7.2(1)]{Kanda-Serre}, see also \cite[p.~631]{Sto72}, there is a bijection of sets:
  \begin{displaymath}
    q \colon \Spec{\Bbbk} \longrightarrow \ASpecp{\Mod{\Bbbk}}
    \quad \text{given by} \quad
    \mathfrak{p} \longmapsto \a{\Bbbk/\mathfrak{p}}\;.
  \end{displaymath}
  This bijection is even an order-isomorphism between the partially ordered set \mbox{$(\Spec{\Bbbk},\subseteq)$} and $\ASpecp{\Mod{\Bbbk}}$ equipped with its specialization order; see \cite[Prop.~4.3]{Kanda-Specialization-orders}. Via $q$ the open subsets of $\ASpecp{\Mod{\Bbbk}}$ correspond to the \emph{speciali\-za\-tion-closed} subsets of $\Spec{\Bbbk}$; see \cite[Prop.~7.2(2)]{Kanda-Serre}. In this paper, we always consider $\Spec{\Bbbk}$ as a partially ordered set w.r.t.~to inclusion and as a topological space in which the open sets are the specialization-closed ones. In this way, the map $q$ above is an order-isomorphism and a homeomorphism.\footnote{\,We emphasize that $q$ is not a homeomorphism when $\Spec{\Bbbk}$ is equipped with the (usual) Zariski topology! In the case where $\Bbbk$ is noetherian, the topological space $\Spec{\Bbbk}$ considered by us and Kanda \cite{Kanda-Serre} is the \emph{Hochster dual}, in the sense of \cite[Prop.~8]{Hochster}, of the spectral space $\Spec{\Bbbk}$ with Zariski topology.}  
\end{ipg}  

\section{The main result}

In this section, we explain how a suitably nice functor $F \colon \c{A} \to \c{B}$ between abelian categories induces a map $\ASpec{F} \colon \ASpec{\c{A}} \to \ASpec{\c{B}}$. The terminology in the following definition is inspired by a similar terminology from 
Diers~\cite[Chap.~1.8]{Diers}, where it is defined what it means for a functor to lift direct factors.

\begin{dfn}
  \label{dfn:cs}
  Let $F \colon \c{A} \to \c{B}$ be a functor. We say that $F$~\emph{lifts subobjects} if for any $A \in \c{A}$ and any monomorphism $\iota \colon B \rightarrowtail FA$ in $\c{B}$ there exist a monomorphism $\iota' \colon A' \rightarrowtail A$ in $\c{A}$ and an isomorphism $B \ira{\longrightarrow} FA'$ such that the following diagram commutes:
\begin{displaymath}
  \xymatrix@!=0.4pc{
    B \ar[dr]_-{\cong} \ar@{>->}[rr]^-{\iota} & & FA
    \\
    {} & {}\phantom{.}FA'\mspace{2mu}.\mspace{-2mu} \ar[ur]_-{F\iota'} & {}
  }
\end{displaymath}  
(We will usually suppress the isomorphism and treat it as an equality $B=FA'$.)
\end{dfn}

\begin{rmk}
  \label{rmk:inj-on-obj}
  Recall that any full and faithful (= fully faithful) functor $F \colon \c{A} \to \c{B}$ is injective on objects up to isomorphism, that is, if $FA \cong FA'$ in $\c{B}$, then $A \cong A'$ in $\c{A}$.
\end{rmk}

\begin{obs}
  \label{obs:inclusion}
  Let $M$ be an object in $\c{A}$. If $\a{H} \in \ASupp{M}$ then, by definition, one has $\a{H}=\a{H'}$ where $H'$ is a monoform object of the form $H' \cong L/L'$ for some $L' \subseteq L \subseteq M$. Now \cite[Prop.~3.3]{Kanda-Serre} applied to $0 \to L' \to L \to H' \to 0$ and $0 \to L \to M \to M/L \to 0$ yield inclusions \mbox{$\ASupp{H'} \subseteq \ASupp{L} \subseteq \ASupp{M}$}.  
\end{obs}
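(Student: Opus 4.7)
The plan is to invoke Kanda's additivity result for atom supports twice, once for each of the two short exact sequences mentioned in the statement. The key fact I expect to use is \cite[Prop.~3.3]{Kanda-Serre}, which states that for any short exact sequence $0 \to L' \to L \to L'' \to 0$ in an abelian category $\c{A}$ one has $\ASupp{L} = \ASupp{L'} \cup \ASupp{L''}$. In particular this yields $\ASupp{L''} \subseteq \ASupp{L}$ and $\ASupp{L'} \subseteq \ASupp{L}$ for every such sequence.

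First I would unfold the definition of atom support in \pgref{topology}: the assumption $\a{H} \in \ASupp{M}$ produces a monoform object $H'$ atom equivalent to $H$ together with subobjects $L' \subseteq L \subseteq M$ such that $H' \cong L/L'$. This gives a canonical short exact sequence $0 \to L' \to L \to H' \to 0$ in $\c{A}$.

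Next I would apply \cite[Prop.~3.3]{Kanda-Serre} to this first sequence to obtain $\ASupp{H'} \subseteq \ASupp{L}$. Then I would use the short exact sequence $0 \to L \to M \to M/L \to 0$ coming from the inclusion $L \subseteq M$, and apply the same proposition once more to conclude $\ASupp{L} \subseteq \ASupp{M}$. Chaining the two inclusions yields $\ASupp{H'} \subseteq \ASupp{L} \subseteq \ASupp{M}$, as desired.

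I do not anticipate any real obstacle: the proof is a direct two-step application of the additivity of atom support along short exact sequences, and no delicate verification (such as showing that a given object is monoform, or that atom equivalence is preserved) is required, since the monoform representative $H'$ and the filtration $L' \subseteq L \subseteq M$ are handed to us by the very definition of $\ASupp{M}$.
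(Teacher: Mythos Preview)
Your proposal is correct and follows exactly the same approach as the paper: the observation itself already contains its entire justification, namely the two applications of \cite[Prop.~3.3]{Kanda-Serre} to the short exact sequences $0 \to L' \to L \to H' \to 0$ and $0 \to L \to M \to M/L \to 0$, and you reproduce precisely this reasoning.
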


\begin{prp}
  \label{prp:ASpecF}
  Let \mbox{$F \colon \c{A} \to \c{B}$} be a full, faithful, and exact functor between abelian categories that lifts subobjects. There is a well-defined map,
\begin{displaymath}  
  \ASpec{F} \colon \ASpec{\c{A}} \longrightarrow \ASpec{\c{B}}
  \quad \text{ given by } \quad 
  \a{H} \longmapsto \a{FH}\;,
\end{displaymath}   
which is injective, continuous, open, and order-preserving.  
\end{prp}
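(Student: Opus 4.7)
The plan is to verify the four stated properties in turn; order-preservation will then follow from \lemref{order-preserving}. The guiding principle is that the hypotheses on $F$ are precisely what is needed to make the ingredients of Kanda's definitions transfer faithfully in both directions: exactness and faithfulness ensure that $F$ reflects zero objects and both preserves and reflects monomorphisms, while the lifting-of-subobjects property lets every subobject of $FA$ in $\c{B}$ be traced back to a subobject of $A$ in $\c{A}$. From these observations one checks that $H$ is monoform in $\c{A}$ if and only if $FH$ is monoform in $\c{B}$: the ``only if'' direction uses the lifting property and \rmkref{inj-on-obj} to pull back witnessing subobjects to $\c{A}$, while the ``if'' direction uses only exactness and the reflection of zero objects.

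For well-definedness I would verify the ``only if'' direction above carefully. Given monoform $H$ and $0 \neq N \rightarrowtail FH$, the lifting property produces $N' \rightarrowtail H$ with $FN' = N$ and $N' \neq 0$. If $FH \leftarrowtail X \rightarrowtail F(H/N') = FH/N$ are monomorphisms, a second and third application of the lifting property yield $X = FX_1$ and $X = FX_2$ with $X_1 \rightarrowtail H$ and $X_2 \rightarrowtail H/N'$, and \rmkref{inj-on-obj} identifies them to a common $X_0$; monoformity of $H$ then forces $X_0 = 0$, whence $X = 0$. The implication $\a{H} = \a{H'} \Rightarrow \a{FH} = \a{FH'}$ is then immediate, since $F$ sends a common non-zero subobject $K$ of $H, H'$ to a common non-zero subobject $FK$ of $FH, FH'$. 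Injectivity runs the reverse trick: a common non-zero subobject $X$ of $FH$ and $FH'$ lifts, through two applications of the lifting property and \rmkref{inj-on-obj}, to a common non-zero subobject $K$ of $H$ and $H'$ in $\c{A}$.

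For continuity and openness, the crux is the identity
\[
  \ASpec{F}(\ASupp{M}) \;=\; \ASupp{FM} \qquad \text{for every } M \in \c{A}.
\]
The inclusion $\subseteq$ follows from exactness (which carries a subquotient $L/L'$ of $M$ to the subquotient $FL/FL'$ of $FM$) together with the preservation of monoforms. The inclusion $\supseteq$ is the substantive step: a monoform $G \cong L/L'$ with $L' \subseteq L \subseteq FM$ is pulled back by two applications of the lifting property to $G \cong F(L_1/L_1')$ with $L_1' \subseteq L_1 \subseteq M$, and $L_1/L_1'$ is monoform in $\c{A}$ by the reflection of monoforms mentioned above.

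Granted this identity, openness is direct: for $U$ open in $\ASpec{\c{A}}$ and $\a{H} \in U$, pick $H' \in \a{H}$ with $\ASupp{H'} \subseteq U$; then $FH' \in \a{FH}$ and $\ASupp{FH'} = \ASpec{F}(\ASupp{H'}) \subseteq \ASpec{F}(U)$. For continuity, given $\a{H}$ with $\ASpec{F}(\a{H}) \in V$ and $G' \in \a{FH}$ with $\ASupp{G'} \subseteq V$, I would take a common non-zero subobject $Y$ of $G'$ and $FH$, lift $Y \rightarrowtail FH$ to $Y = FY'$ with $0 \neq Y' \rightarrowtail H$, and note that $\a{Y'} = \a{H}$ while \obsref{inclusion} applied to $FY' \rightarrowtail G'$ gives $\ASupp{FY'} \subseteq \ASupp{G'}$; combined with the identity this forces every $\a{K} \in \ASupp{Y'}$ to satisfy $\ASpec{F}(\a{K}) \in V$. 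The main technical obstacle throughout is the repeated bookkeeping with the lifting property, fullness, and faithfulness needed to move monomorphisms and quotients back and forth across $F$; once that manipulation is routine, everything else is formal manipulation with the definition of the localization topology.
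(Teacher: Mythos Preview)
Your proposal is correct and follows essentially the same approach as the paper: you establish the same monoform biconditional, the same key identity $\ASpec{F}(\ASupp{M}) = \ASupp{FM}$, and derive continuity and openness from it via the same lifting-and-\rmkref{inj-on-obj} manoeuvres. The only cosmetic difference is that the paper invokes the open-basis description $\{\ASupp{M}\,|\,M\in\c{A}\}$ to get openness in one line, whereas you verify the openness criterion directly from the definition, and similarly your continuity argument is the paper's run against an arbitrary open set rather than a basic one.
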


\begin{proof}
  First we argue that for any object $H \in \c{A}$ we have:
  \begin{equation}
    \label{eq:H-FH-monoform}
    \textnormal{$H$ is monoform (in $\c{A}$)}
    \ \iff \ 
    \textnormal{$FH$ is monoform (in $\c{B}$)}\;.    
  \end{equation}
  
  ``$\Leftarrow$'': Assume that $FH$ is monoform. By definition, $FH$ is non-zero, so $H$ must be non-zero as well. Let $M$ be a non-zero subobject of $H$ and assume that there are monomorphisms \mbox{$H \leftarrowtail X \rightarrowtail H/M$}. We must prove that $X = 0$. As $F$ is exact we get monomorphisms \mbox{$FH \leftarrowtail FX \rightarrowtail F(H/M) \cong (FH)/(FM)$}. Note that $FM \neq 0$ by \rmkref{inj-on-obj}, so it follows that $FX=0$ since $FH$ is monoform. Hence $X=0$, as desired.
  
  ``$\Rightarrow$'': Assume that $H$ is monoform. As $H \neq 0$ we have $FH \neq 0$ by \rmkref{inj-on-obj}. Let $N$ be a non-zero subobject of $FH$ and let \mbox{$FH \leftarrowtail Y \rightarrowtail (FH)/N$} be monomorphisms. We must prove that $Y = 0$. Since $F$ lifts subobjects, the monomorphism $N \rightarrowtail FH$ is the image under $F$ of a monomorphisms $M \rightarrowtail H$. As $FM=N$ is non-zero, so is $M$. Since $F$ is exact, the canonical morphism $(FH)/N = (FH)/(FM) \to F(H/M)$ is an isomorphism. By precomposing this isomorphism with $Y \rightarrowtail (FH)/N$ we get a monomorphism $Y \rightarrowtail F(H/M)$, which is then the image under $F$ of some monomorphism $X \rightarrowtail H/M$. The monomorphism $FH \leftarrowtail Y$ is also the image of a monomorphism $H \leftarrowtail X'$, and since $FX = Y = FX'$ we have $X \cong X'$ by \rmkref{inj-on-obj}. Hence there are monomorphisms \mbox{$H \leftarrowtail X \rightarrowtail H/M$}, and as $H$ is monoform we conclude that $X=0$. Hence $Y=FX=0$, as desired.
 
Next note that if $H$ and $H'$ are monoform objects in $\c{A}$ which are atom equivalent, i.e. they have a common non-zero subobject $M$, then $FM$ is a common non-zero subobject of $FH$ and $FH'$, and hence $FH$ and $FH'$ are atom equivalent monoform objects in $\c{B}$. This, together with the implication ``$\Rightarrow$'' in \eqref{H-FH-monoform}, shows that the map $\ASpec{F}$ is well-defined.

To see that $\ASpec{F}$ is injective, let $H$ and $H'$ be monoform objects in $\c{A}$ for which $FH$ and $FH'$ are atom equivalent, i.e. there is a common non-zero subobject $FH \leftarrowtail N \rightarrowtail FH'$. From the fact that $F$ lifts subobjects, and from \rmkref{inj-on-obj}, we get that these monomorphisms are the images under $F$ of monomorphisms $H \leftarrowtail M \rightarrowtail H'$. As $FM = N$ is non-zero, so if $M$. Thus, $H$ and $H'$ are atom equivalent in $\c{A}$.

Next we show that for every object $M \in \c{A}$ there is an equality:
\begin{equation}
  \label{eq:FASupp}
  (\ASpec{F})(\ASupp{M}) \,=\, \ASupp{FM}\;.
\end{equation}

``$\subseteq$'': Let $\a{H} \in \ASupp{M}$, that is, $\a{H} = \a{H'}$ for some monoform object $H'$ of the form $H' \cong L/L'$ where $L' \subseteq L \subseteq M$. We have $(\ASpec{F})(\a{H}) = (\ASpec{F})(\a{H'}) = \a{FH'}$, so we must argue that $\a{FH'}$ is in  $\ASupp{FM}$. As $F$ is exact we have $FL' \subseteq FL \subseteq FM$ and $FH' \cong F(L/L') \cong (FL)/(FL')$ and hence  $\a{FH'} \in \ASupp{FM}$ by definition.

``$\supseteq$'': Let $\a{I} \in \ASupp{FM}$, that is, $\a{I} = \a{I'}$ for some monoform object in $\c{B}$ with $I' \cong N/N'$ where $N' \subseteq N \subseteq FM$. Since $N \subseteq FM$ and $F$ lifts subobjects, there is a subobject $L \subseteq M$ with $FL = N$. Similarly, as $N' \subseteq N = FL$ there is a subobject $L' \subseteq L$ with $FL' = N'$. We now have $L' \subseteq L \subseteq M$ and $F(L/L') \cong (FL)/(FL') \cong N/N' \cong I'$. Since $I'$ is monoform, we conclude from   \eqref{H-FH-monoform} that the object $H:=L/L'$ is monoform, so $\a{H}$ belongs to $\ASupp{M}$. And by constuction, $(\ASpec{F})(\a{H}) = \a{FH} = \a{I'} = \a{I}$.

Recall from \ref{topology} that $\{ \ASupp{M} \ |\, M \in \c{A} \}$ is  an open basis of the topology on $\ASpec{\c{A}}$ (and similarly for $\ASpec{\c{B}}$). It is therefore evident from \eqref{FASupp} that $\ASpec{F}$ is an open map.

Furthermore, to show that $\ASpec{F}$ is continuous, it suffices to show that for any $N \in \c{B}$, the set $\upPhi:=(\ASpec{F})^{-1}(\ASupp{N})$ is open in $\ASpec{\c{A}}$. To see this, let $\a{H} \in \upPhi$ be given. This means that $\a{FH} \in \ASupp{N}$, so \obsref{inclusion} shows that $\a{FH} = \a{K}$ for some monoform object $K$ with $\ASupp{K} \subseteq \ASupp{N}$. By definition of atom equivalence, $FH$ and $K$ have a common non-zero subobject, $K'$, and as $F$ lifts subobjects we have $K' = FH'$ for some non-zero subobject $H'$ of $H$. Clearly $H' \in \a{H}$. Furthermore one has
\begin{displaymath}
  (\ASpec{F})(\ASupp{H'}) \,=\, \ASupp{FH'} \,=\, \ASupp{K'} \,\subseteq\, \ASupp{K} \,\subseteq\, \ASupp N\;,
\end{displaymath}
which means that $\ASupp{H'} \subseteq \upPhi$. Thus $\upPhi$ is open by \ref{topology}.

From the continuity and from \lemref{order-preserving} we get that $\ASpec{F}$ is order-preserving. 
\end{proof}

In the proposition above we considered a full, faithful, and exact functor that lifts subobjects. The result below gives an alternative characterization of such functors. Recall that the \emph{essential image} of a functor $F \colon \c{A} \to \c{B}$ is the smallest full subcategory, $\EssIm{F}$, of $\c{B}$ which contains the image of $F$ and is closed under isomorphisms. We say that $\EssIm{F}$ is \emph{closed under subobjects}, respectively, \emph{closed under quotient objects}, provided that the situation $B \subseteq I \in \EssIm{F}$ in $\c{B}$ implies $B \in \EssIm{F}$, respectively, $I/B \in \EssIm{F}$.

\begin{lem}
  \label{lem:referee-comment}
  Let $F \colon \c{A} \to \c{B}$ be a full, faithful, and additive functor between abelian categories. Such a functor $F$ is exact and lifts subobjects if and only if\,  $\EssIm{F}$ is closed under subobjects. In this case, $\EssIm{F}$ is also closed under quotient objects.
\end{lem}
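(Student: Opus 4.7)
The ``$\Rightarrow$'' direction is direct: for a mono $B \rightarrowtail I$ with $I \cong FA \in \EssIm{F}$, since $F$ lifts subobjects there is $\iota' \colon A' \rightarrowtail A$ in $\c{A}$ with $FA' \cong B$, so $B \in \EssIm{F}$.

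For ``$\Leftarrow$'', assume $\EssIm{F}$ is closed under subobjects. Lifting of subobjects is then almost tautological: any mono $\iota \colon B \rightarrowtail FA$ in $\c{B}$ has $B \cong FA'$ by hypothesis, fullness realizes $\iota$ as $F\iota'$ for some $\iota' \colon A' \to A$, and since faithful additive functors reflect monomorphisms, $\iota'$ is a mono. Next, I would show that $F$ preserves kernels, and hence is left exact: if $\kappa \colon K \rightarrowtail A$ is the kernel of $\alpha \colon A \to A'$ and $K' \rightarrowtail FA$ is the kernel of $F\alpha$ in $\c{B}$, the lifting argument just given produces $\lambda \colon L \rightarrowtail A$ with $FL = K'$; faithfulness forces $\alpha\lambda = 0$, so $\lambda$ factors through $\kappa$, while conversely $F\kappa$ factors through $FL = K'$. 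Combining the two factorizations and using that $\kappa$ and $\lambda$ are monos identifies $K$ with $L$ as subobjects of $A$, so $FK = K'$.

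The main obstacle is right-exactness, because quotient objects are not directly accessible from the hypothesis. The key idea is to detect epimorphisms via the image factorization taken inside $\c{B}$, whose mono part \emph{is} a subobject. Given an epi $\beta \colon A' \twoheadrightarrow A''$ in $\c{A}$, factor $F\beta = \mu\pi$ in $\c{B}$ with $\mu \colon J \rightarrowtail FA''$ its image mono and $\pi$ an epi. Closure under subobjects writes $J = FJ'$, and fullness together with faithfulness lifts the whole factorization to $\beta = \mu'\pi'$ in $\c{A}$ with $\mu' \colon J' \rightarrowtail A''$ a mono. Since $\beta$ is an epi and $\mu'$ is a mono, $\mu'$ is forced to be an isomorphism; hence $\mu = F\mu'$ is an iso and $F\beta = \mu\pi$ is an epi. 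Combining with left-exactness gives exactness of $F$: for a short exact sequence $0 \to I \to A' \to C \to 0$ in $\c{A}$, the image under $F$ has a mono on the left, an epi on the right, and exactness in the middle via $\ker(F\pi) = F(\ker\pi) = FI$.

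Closure of $\EssIm{F}$ under quotient objects is then immediate from exactness: for $B \rightarrowtail I$ with $I \in \EssIm{F}$, closure under subobjects gives $B = FA \in \EssIm{F}$, the mono lifts to $\iota \colon A \rightarrowtail A'$ in $\c{A}$, and $I/B \cong F(\operatorname{coker}\iota) \in \EssIm{F}$ by exactness of $F$.
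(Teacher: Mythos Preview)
Your proof is correct. Both directions work, and the final assertion about quotient objects is handled properly.

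Your route differs from the paper's in how you establish exactness in the ``$\Leftarrow$'' direction. The paper does a single unified argument: given an exact sequence $A' \to A \to A''$ at the middle term, it draws the image/kernel factorizations in both $\c{A}$ and $\c{B}$, lifts $\Ker F\alpha \rightarrowtail FA$ to some $K \rightarrowtail A$ and then $\Im F\alpha' \rightarrowtail FK$ to some $I \rightarrowtail K$, and shows the comparison map $\psi' \colon I \to K$ has a right inverse by chasing through the universal properties of image and kernel. You instead decompose the problem: first show $F$ preserves kernels (hence is left exact), then show $F$ preserves epimorphisms via the image factorization of $F\beta$ in $\c{B}$, whose mono part can be lifted back to $\c{A}$ where the original epi forces it to be an isomorphism. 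Your decomposition is cleaner and isolates the two phenomena (left exactness from kernel comparison, right exactness from the image trick), whereas the paper's argument is more compressed but requires tracking more data simultaneously. Both rely on the same underlying mechanism---lifting monos in $\c{B}$ to monos in $\c{A}$ and using fullness/faithfulness to transport identities---so neither is more general, but yours is easier to verify step by step.
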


\begin{proof}
  If $F$ is exact and lifts subobjects, then clearly $\EssIm{F}$ is closed under both subobjects and quotient objects. Conversely, assume that $\EssIm{F}$ is closed under subobjects. Since $F$ is faithful it reflects monomorphisms, that is, if $\alpha$ is a morphism for which $F\alpha$ is mono, then $\alpha$ itself is mono. From this it follows that $F$ lifts subobjects, since $F$ is full. To see that $F$ is exact, let $A' \ira[\alpha']{\longrightarrow} A \ira[\alpha]{\longrightarrow} A''$ be an exact sequence in $\c{A}$. Consider the diagrams,
\begin{displaymath}
  \begin{gathered}
  \xymatrix@!=0.8pc{
    A' \ar@{->>}[dr] \ar[rr]^-{\alpha'} 
    & & 
    A \ar[rr]^-{\alpha} 
    & & 
    A''
    \\
    {} & 
    \Im{\alpha'} \ar@{>->}[ur]^-{\iota} \ar[rr]^-{\varphi}_-{\cong} 
    & & 
    \Ker{\alpha} \ar@{>->}[ul]_-{\kappa} \ar[ur]_-{0} 
    & {}
  }
  \end{gathered}  
  \quad \text{and} \quad
  \begin{gathered}  
  \xymatrix@!=0.8pc{
    FA' \ar@{->>}[dr]_-{\pi} \ar[rr]^-{F\alpha'} 
    & & 
    FA \ar[rr]^-{F\alpha} 
    & & 
    FA''
    \\
    {} & 
    \Im{F\alpha'} \ar@{>->}[ur]^-{\varepsilon} \ar@{>->}[rr]^-{\psi}
    & & 
    \Ker{F\alpha}\;, \ar@{>->}[ul]_-{\lambda} \ar[ur]_-{0} 
    & {}
  }  
\end{gathered}    
\end{displaymath}  
where $\varphi$ is an isomorphism by exactness of the given sequence. As $F$ lifts subobjects there is a mono $\lambda' \colon K \rightarrowtail A$ with $FK=\Ker{F\alpha}$ and $F\lambda' = \lambda$, and further a mono $\psi' \colon I \rightarrowtail K$ with $FI=\Im{F\alpha'}$ and $F\psi' = \psi$. We will show that $\psi'$ has a right inverse; hence it is an isomorphism and therefore so is $\psi=F\psi'$, as desired. As $F$ is full there exists $\pi' \colon A' \to I$ with $F\pi' = \pi$. Note that $\lambda'\psi' \colon I \rightarrowtail A$ is mono and satisfies $\lambda'\psi'\pi' = \alpha'$ since $F(\lambda'\psi'\pi') = \lambda \psi \pi = \varepsilon \pi = F\alpha'$. By the universal property of the image, there exists a unique $\theta \colon \Im{\alpha'} \to I$ with $\lambda'\psi'\theta = \iota$. We also have $F(\alpha\lambda') = F\alpha \circ \lambda = 0$ and hence $\alpha\lambda'=0$, so by the universal property of the kernel there is a unique $\eta \colon K \to \Ker{\alpha}$ with $\kappa\eta = \lambda'$. Now $\theta\mspace{1mu}\varphi^{-1}\eta \colon K \to I$ is a right inverse of $\psi'$, indeed, $\lambda'\psi'\theta\mspace{1mu}\varphi^{-1}\eta = \iota\varphi^{-1}\eta = \kappa\eta = \lambda' = \lambda'\,\mathrm{id}_K$ and $\lambda'$ is mono.
\end{proof}

\begin{ipg}
  \label{disjoint-union}
  Let $\{X_i\}_{i \in I}$ be a family of sets and write $\bigsqcup_{i \in I}X_i$ for the disjoint union. This is the coproduct of $\{X_i\}_{i \in I}$ in the category of sets, so given any family $f_i \colon X_i \to Y$ of maps, there is a unique map $f$ that makes the following diagram commute:
\begin{displaymath}
  \xymatrix{  
    X_i \ar@{>->}[d] \ar[dr]^-{f_i} & {} \\ 
    \bigsqcup_{i \in I}X_i \ar@{..}[r]\ar@{..}[r]\ar@{..}[r]\ar@{..>}[r]_-{f} & Y\;.
  }
\end{displaymath}  

In the case where each $X_i$ is a topological space, $\bigsqcup_{i \in I}X_i$ is equipped with the disjoint union topology, and this yields the coproduct of $\{X_i\}_{i \in I}$ in the category of topological spaces. In fact, for the maps $f_i$ and $f$ in the diagram above, it is well-known that one has:
\begin{displaymath}
  \textnormal{$f$ is continuous (open) \ $\iff$ \ $f_i$ is continuous (open) for every $i \in I$\;.}
\end{displaymath}

If each $X_i$ is a Kolmogorov space, then so is $\bigsqcup_{i \in I}X_i$ (and hence it is the coproduct in the category of Kolmogorov spaces). In this case, and if $Y$ is also a Kolmogorov space, any continuous map $f$ in the diagram above is automatically order-preserving by \lemref{order-preserving}.
\end{ipg}

\begin{prp}
  \label{prp:f}
  Let \mbox{$F_{\mspace{-1mu}i} \colon \c{A}_i \to \c{B}$} $(i \in I)$ be a family of full, faithful, and exact functors between abelian categories that lift subobjects. There exists a unique map $f$ that makes the following diagram commute:
\begin{displaymath}
  \xymatrix{  
    \ASpec{\c{A}_i} \ar@{>->}[d] \ar[dr]^-{\ASpec{F_{\mspace{-1mu}i}}} & {} \\ 
    \bigsqcup_{i \in I}\ASpec{\c{A}_i} \ar@{..}[r]\ar@{..}[r]\ar@{..}[r]\ar@{..>}[r]_-{f} & \ASpec{\c{B}}\;.
  }
\end{displaymath}    
That is, $f$ maps $\a{H} \in \ASpec{\c{A}_i}$ to $\a{F_{\mspace{-1mu}i}\mspace{1mu}H} \in \ASpec{\c{B}}$. This map $f$ is continuous, open, and order-preserving.
\end{prp}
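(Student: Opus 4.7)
The plan is to deduce this proposition directly from the universal property of the disjoint union recalled in \ref{disjoint-union} together with \prpref{ASpecF}. First, the maps $\ASpec{F_{\mspace{-1mu}i}}\colon\ASpec{\c{A}_i}\to\ASpec{\c{B}}$ are well-defined by \prpref{ASpecF} (whose hypotheses are exactly those placed on each $F_{\mspace{-1mu}i}$). Viewing the spectra simply as sets, the coproduct property of $\bigsqcup_{i\in I}\ASpec{\c{A}_i}$ in the category of sets yields a unique map $f$ with $f|_{\ASpec{\c{A}_i}}=\ASpec{F_{\mspace{-1mu}i}}$, and on elements this forces $f(\a{H})=\a{F_{\mspace{-1mu}i}H}$ for $\a{H}\in\ASpec{\c{A}_i}$.

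Next I would upgrade this to a topological statement. Each $\ASpec{\c{A}_i}$ and $\ASpec{\c{B}}$ is a Kolmogorov space by \ref{order}, and so the disjoint union $\bigsqcup_{i\in I}\ASpec{\c{A}_i}$ equipped with the disjoint union topology is again Kolmogorov, as noted in \ref{disjoint-union}. By \prpref{ASpecF}, every $\ASpec{F_{\mspace{-1mu}i}}$ is continuous and open. Invoking the characterization in \ref{disjoint-union}, namely that a map out of a coproduct is continuous (respectively, open) if and only if each of its restrictions is so, we conclude that $f$ itself is both continuous and open.

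Finally, to obtain order-preservation I would apply \lemref{order-preserving}: since $f$ is a continuous map between the Kolmogorov spaces $\bigsqcup_{i\in I}\ASpec{\c{A}_i}$ and $\ASpec{\c{B}}$, equipped with their specialization orders, it is automatically order-preserving.

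There is no real obstacle here; the statement is essentially a formal consequence of \prpref{ASpecF} combined with the coproduct description of disjoint unions of Kolmogorov spaces recalled in \ref{disjoint-union}. The only point that requires any care is checking that the disjoint union topology on $\bigsqcup_{i\in I}\ASpec{\c{A}_i}$ is Kolmogorov so that \lemref{order-preserving} applies, but this is immediate from the fact that an open singleton-separating property lifts componentwise to a disjoint union.
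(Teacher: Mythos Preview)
Your proposal is correct and follows exactly the paper's approach: the paper's proof is the single line ``Immediate from \prpref{ASpecF} and \ref{disjoint-union},'' and you have simply unpacked what that sentence means, invoking the same ingredients (well-definedness, continuity, and openness of each $\ASpec{F_i}$ from \prpref{ASpecF}; the coproduct characterization from \ref{disjoint-union}; and order-preservation via \lemref{order-preserving}, which \ref{disjoint-union} already notes).
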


\begin{proof}
  Immediate from \prpref{ASpecF} and \ref{disjoint-union}.
\end{proof}

Our next goal is to find conditions on the functors $F_{\mspace{-1mu}i}$ from \prpref{f} which ensure that the map $f$ is bijective, and hence a homeomorphism and an order-isomorphism.

\begin{thm}
  \label{thm:bijective}
  Let \mbox{$F_{\mspace{-1mu}i} \colon \c{A}_i \to \c{B}$} $(i \in I)$ be a family of functors as in \prpref{f} and consider the induced continuous, open, and order-preserving map 
\begin{displaymath}  
  f \colon \textstyle\bigsqcup_{i \in I}\,\ASpec{\c{A}_i} \longrightarrow \ASpec{\c{B}}\;.
\end{displaymath}  
The map $f$ is injective provided that the following condition holds:
\begin{prt}
\item For $i \neq j$ and $A_i \in \c{A}_i$ and $A_{\mspace{-2mu}j } \in \c{A}_j$ the only common subobject of $F_{\mspace{-1mu}i}\mspace{1mu}A_i$ and \mbox{$F_{\mspace{-4mu}j}\mspace{1mu}A_{\mspace{-2mu}j}$} is $0$.
\end{prt}    
The map $f$ is surjective provided that each $F_{\mspace{-1mu}i}$ has a right adjoint $G_i$ satisfying:
\begin{prt}
\setcounter{prt}{1}
\item For every $B\neq 0$ in $\c{B}$ there exists $i \in I$ with $G_iB \neq 0$.
\end{prt}  
Thus, if \textnormal{(a)} and \textnormal{(b)} hold, then $f$ is a homeomorphism and an order-isomorphism.
\end{thm}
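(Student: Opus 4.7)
The continuity, openness, and order-preservation of $f$ are already supplied by \prpref{f}, so the remaining task is to establish injectivity under \prtlbl{a} and surjectivity under \prtlbl{b}. My plan is to dispatch injectivity by a case analysis on the two indices, and to handle surjectivity by exhibiting, for each monoform $K \in \c{B}$, a nonzero subobject of $K$ inside the essential image of some $F_{\mspace{-1mu}i}$, produced as the image of the counit of the adjunction $F_{\mspace{-1mu}i} \dashv G_i$.

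For injectivity, suppose $f(\a{H}) = f(\a{H'})$ with $H \in \c{A}_i$ and $H' \in \c{A}_j$ both monoform. When $i = j$, the injectivity of $\ASpec{F_{\mspace{-1mu}i}}$ recorded in \prpref{ASpecF} yields $\a{H} = \a{H'}$ directly. When $i \neq j$, atom equivalence of $F_{\mspace{-1mu}i}\mspace{1mu}H$ and $F_{\mspace{-4mu}j}\mspace{1mu}H'$ would produce a common nonzero subobject in $\c{B}$, directly contradicting \prtlbl{a}; so this case cannot occur.

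For surjectivity, let $K$ be monoform in $\c{B}$. By \prtlbl{b} I pick $i \in I$ with $G_iK \neq 0$. The counit $\varepsilon_K \colon F_{\mspace{-1mu}i}\mspace{1mu}G_iK \to K$ is then nonzero: were it zero, the triangle identity $G_i\varepsilon_K \circ \eta_{G_iK} = \mathrm{id}_{G_iK}$ would force $G_iK = 0$. Factor $\varepsilon_K$ as $F_{\mspace{-1mu}i}\mspace{1mu}G_iK \twoheadrightarrow \Im{\varepsilon_K} \rightarrowtail K$; the image is then a quotient of an object in $\EssIm{F_{\mspace{-1mu}i}}$, hence itself lies in $\EssIm{F_{\mspace{-1mu}i}}$ by the last assertion of \lemref{referee-comment}, so one can write $\Im{\varepsilon_K} \cong F_{\mspace{-1mu}i}\mspace{1mu}H$ for some $H \in \c{A}_i$. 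A nonzero subobject of a monoform object is again monoform (any would-be violating pair $L' \subseteq X \rightarrowtail L/L'$ inside a subobject $L \subseteq K$ maps compatibly to $K \leftarrowtail X \rightarrowtail K/L'$), so $\Im{\varepsilon_K}$ is monoform, and the biconditional \eqref{H-FH-monoform} proved inside \prpref{ASpecF} lifts this to monoformity of $H$. Since $F_{\mspace{-1mu}i}\mspace{1mu}H \cong \Im{\varepsilon_K}$ is a common nonzero subobject of $F_{\mspace{-1mu}i}\mspace{1mu}H$ and $K$, one concludes $\a{F_{\mspace{-1mu}i}\mspace{1mu}H} = \a{K}$, i.e.\ $f(\a{H}) = \a{K}$. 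The main obstacle is precisely this last step: the counit $\varepsilon_K$ is only guaranteed to be a morphism, not a mono, so one cannot directly exhibit $F_{\mspace{-1mu}i}\mspace{1mu}G_iK$ as a subobject of $K$; closure of $\EssIm{F_{\mspace{-1mu}i}}$ under quotients, recorded in \lemref{referee-comment}, is what makes the image construction carry through.
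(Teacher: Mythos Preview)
Your injectivity argument matches the paper's exactly. For surjectivity, however, you take a genuinely different route. The paper first proves that under the standing hypotheses the counit $\varepsilon_B \colon F_{\mspace{-1mu}i}\mspace{1mu}G_iB \to B$ is always a \emph{monomorphism}: one lifts $\Ker{\varepsilon_B} \rightarrowtail F_{\mspace{-1mu}i}\mspace{1mu}G_iB$ to a subobject $K \rightarrowtail G_iB$ and then uses fullness, faithfulness, and the adjunction isomorphism to conclude $\Hom{\c{B}}{F_{\mspace{-1mu}i}K}{F_{\mspace{-1mu}i}K}=0$. With the counit mono, $F_{\mspace{-1mu}i}\mspace{1mu}G_iK$ is directly a nonzero subobject of the monoform $K$, and the preimage is explicitly $\a{G_iK}$. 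Your approach sidesteps this lemma entirely: you only need $\varepsilon_K \neq 0$ (via the triangle identity), then pass to $\Im{\varepsilon_K}$ and invoke the quotient-closure clause of \lemref{referee-comment} to place the image back in $\EssIm{F_{\mspace{-1mu}i}}$. This is slightly more economical---you never analyze the kernel of the counit---at the cost of losing the explicit description of the preimage (your $H$ is ``some'' object with $F_{\mspace{-1mu}i}H \cong \Im{\varepsilon_K}$, whereas the paper names it as $G_iK$). Both arguments are correct; the paper's extra work yields the sharper statement that $\varepsilon$ is mono, which may be of independent interest.
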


\begin{proof}
  First we show that condition (a) implies injectivity of $f$. Let $\a{H} \in \ASpec{\c{A}_i}$~and $\a{H'} \in \ASpec{\c{A}_j}$ be arbitrary elements in $\bigsqcup_{i \in I}\ASpec{\c{A}_i}$ with $f(\a{H}) = f(\a{H'})$, that is, $\a{F_{\mspace{-1mu}i}\mspace{1mu}H} = \a{F_{\mspace{-4mu}j}\mspace{1mu}H'}$. This means that the monoform objects $F_{\mspace{-1mu}i}\mspace{1mu}H$ and $F_{\mspace{-4mu}j}\mspace{1mu}H'$ are atom equivalent, so they contain a common non-zero subobject $N$. By the assumption (a), we must have $i=j$.  As the map $\ASpec{F_{\mspace{-1mu}i}}$ is injective, see \prpref{ASpecF}, we conclude that $\a{H} = \a{H'}$.
  
Next we show that condition (b) implies surjectivity of $f$. First note that for every $i \in I$ and $B \in \c{B}$ the counit $\varepsilon_B \colon F_{\mspace{-1mu}i}\mspace{1mu}G_iB \to B$ is a monomorphism. Indeed, consider the subobject $\iota \colon \Ker{\varepsilon_B} \rightarrowtail F_{\mspace{-1mu}i}\mspace{1mu}G_iB$. Since $F_{\mspace{-1mu}i}$ lifts subobjects there is a monomorphism $\iota' \colon K \rightarrowtail G_iB$ with $F_{\mspace{-1mu}i}K = \Ker{\varepsilon_B}$ and $F_{\mspace{-1mu}i}(\iota') = \iota$. Applying the functor $\Hom{\c{B}}{F_{\mspace{-1mu}i}K}{-}$ to the exact se\-quen\-ce $0 \to F_{\mspace{-1mu}i}K \to F_{\mspace{-1mu}i}\mspace{1mu}G_iB \to B$, we get an exact sequence,
\begin{equation}
  \label{eq:varepsilon}
  \xymatrix@C=1.7pc{
    0 \ar[r] & \Hom{\c{B}}{F_{\mspace{-1mu}i}K}{F_{\mspace{-1mu}i}K} \ar[r] & \Hom{\c{B}}{F_{\mspace{-1mu}i}K}{F_{\mspace{-1mu}i}\mspace{1mu}G_iB} \ar[r]^-{\varepsilon_B \,\circ\, -} & \Hom{\c{B}}{F_{\mspace{-1mu}i}K}{B}
  }.
\end{equation}
The rightmost map is an isomorphism, indeed, \cite[IV\S1]{Mac} yields a commutative diagram
\begin{displaymath}
  \xymatrix{
    \Hom{\c{A}}{K}{G_iB} \ar[d]_-{F_{\mspace{-1mu}i}(-)}^-{\cong} \ar[dr]^-{\ \ \ \ \ \varphi^{-1}_{K,B} \textnormal{ \ (adjunction)}}_-{\cong}
    \\
    \Hom{\c{B}}{F_{\mspace{-1mu}i}K}{F_{\mspace{-1mu}i}\mspace{1mu}G_iB} \ar[r]_-{\varepsilon_B \,\circ\, -} & \Hom{\c{B}}{F_{\mspace{-1mu}i}K}{B}\;.
  }
\end{displaymath}
In this diagram, the vertical map is an isomorphism as $F_{\mspace{-1mu}i}$ is assumed to be full and faithful, so it follows that $\varepsilon_B \circ -$ is an isomorphism. Now \eqref{varepsilon} shows that $\Hom{\c{B}}{F_{\mspace{-1mu}i}K}{F_{\mspace{-1mu}i}K}=0$ and hence $F_{\mspace{-1mu}i}K = \Ker{\varepsilon_B} = 0$.

To see that $f$ is surjective, let $H$ be any monoform object in $\c{B}$. As $H \neq 0$ there exists by (b) some $i \in I$ with $G_iH \neq 0$. This implies  $F_{\mspace{-1mu}i}\mspace{1mu}G_iH \neq 0$, see \rmkref{inj-on-obj}. As just proved, $F_{\mspace{-1mu}i}\mspace{1mu}G_iH$ is a subobject of $H$, so in this case
$F_{\mspace{-1mu}i}\mspace{1mu}G_iH$ is a non-zero subobject of the monoform object $H$. Thus \cite[Prop.~2.2]{Kanda-Serre} implies that $F_{\mspace{-1mu}i}\mspace{1mu}G_iH$ is a monoform object, atom equivalent to $H$. From \eqref{H-FH-monoform} we get that $G_iH$ is monoform, so $\a{G_iH}$ is an element in $\ASpec{\c{A}_i}$ satisfying $f(\a{G_iH}) = \a{F_{\mspace{-1mu}i}\mspace{1mu}G_iH} = \a{H}$.
\end{proof}

\section{Application to quiver representations}
\label{sec:Application-quiver}

In this section, we will apply \thmref{bijective} to compute the atom spectrum of the category of $\c{A}$-valued representations of any (well-behaved) quiver with relations $(Q,\c{R})$. Here $\c{A}$ is a $\Bbbk$-linear abelian category and $\Bbbk$ is any commutative ring. \appref{Quiver} contains some background material on quivers with relations and their representations needed in this section. The main result is \thmref{quiver}, and we also prove Theorem~A from the Introduction.
  
Enochs, Estrada, and Garc{\'{\i}}a Rozas define in \cite[Sect.~4]{EEGR09} what it means for a quiver,~\textsl{without} relations, to be right rooted. Below we extend their definition to quivers \textsl{with} relations. To parse the following, recall the notion of the $\Bbbk$-linearization of a category and that of an ideal in a $\Bbbk$-linear category, as described in \ref{app-k-linear} and \ref{app-relations}.
    
\begin{dfn}
  \label{dfn:right-rooted}
  A quiver with relations $(Q,\c{R})$ is said to be \emph{right rooted} if for every infinite sequence of (not necessarily different) composable arrows in $Q$,
\begin{displaymath}
  \xymatrix@C=1.3pc{
    \bullet \ar[r]^-{a_1} & \bullet \ar[r]^-{a_2} & \bullet \ar[r]^-{a_3} & \cdots
    },
\end{displaymath}    
there exists $N \in \mathbb{N}$ such that the path $a_N\cdots a_1$ (which is a morphism in the category $\Bbbk \bar{Q}$) belongs to the two-sided ideal $(\c{R}) \subseteq \Bbbk \bar{Q}$.
\end{dfn}
  
\begin{obs}
  \label{obs:extend}
  Let $(Q,\c{R})$ be a quiver with relations. If there exists \textsl{no} infinite sequence $\bullet \xrightarrow{\ \ \ } \bullet \xrightarrow{\ \ \ } \bullet \xrightarrow{\ \ \ } \cdots$ of (not necessarily different) composable arrows in $Q$, then $(Q,\c{R})$ is right rooted, as the requirement in \dfnref{right-rooted} becomes void. If $Q$ is a quiver without relations, i.e.~$\c{R}=\emptyset$ and hence $(\c{R})=\{0\}$, then $Q$ is right rooted if and only if there exists no such infinite sequence $\bullet \xrightarrow{\ \ \ } \bullet \xrightarrow{\ \ \ } \bullet \xrightarrow{\ \ \ } \cdots$; indeed, a path $a_N\cdots a_1$ is never zero in the absence of relations. Consequently, our \dfnref{right-rooted} of right rootedness for quivers with relations extends the similar definition for quivers without relations found in \cite[Sect.~4]{EEGR09}.
\end{obs}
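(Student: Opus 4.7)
The statement decomposes into three claims, the third being a consequence of the first two, so my plan is to handle them in order.

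The plan for the first claim is to simply appeal to vacuous quantification. \dfnref{right-rooted} begins with ``for every infinite sequence of composable arrows $a_1, a_2, a_3, \ldots$, there exists $N \in \mathbb{N}$ such that \ldots.'' If no such infinite sequence exists in $Q$, then the universal statement is vacuously satisfied, regardless of what $\c{R}$ or the ideal $(\c{R})$ looks like. Hence $(Q,\c{R})$ is right rooted.

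For the second claim, the ``if'' direction is immediate from the first claim applied with $\c{R}=\emptyset$. For the ``only if'' direction I would argue the contrapositive: assume there is an infinite composable sequence $\bullet \xrightarrow{a_1} \bullet \xrightarrow{a_2} \bullet \xrightarrow{a_3} \cdots$ in $Q$ and that $\c{R}=\emptyset$. Then $(\c{R}) = \{0\}$, so the right-rooted condition would require some $N$ with $a_N \cdots a_1 = 0$ as a morphism in $\Bbbk \bar{Q}$. The key input, taken from the background in \appref{Quiver} (specifically the description of the $\Bbbk$-linearization recalled in \ref{app-k-linear}), is that the hom-sets of $\Bbbk \bar{Q}$ are free $\Bbbk$-modules with basis the set of paths between the relevant vertices. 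In particular, any single (nontrivial) path is a basis element and therefore nonzero. This contradicts $a_N\cdots a_1 = 0$, so the failure of the non-existence condition implies the failure of right rootedness. The third claim follows by combining the two.

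The main obstacle, such as it is, is making sure to cite (rather than re-prove) the freeness of morphism modules in the $\Bbbk$-linearization $\Bbbk\bar Q$, which is exactly what lets us conclude that paths are nonzero when no relations are imposed. Everything else is either vacuous or a direct unwinding of \dfnref{right-rooted}.
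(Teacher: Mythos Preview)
Your proposal is correct and matches the paper's own reasoning essentially verbatim: the paper justifies the first claim by noting the requirement in \dfnref{right-rooted} becomes void, and the second by observing that a path $a_N\cdots a_1$ is never zero when $(\c{R})=\{0\}$. Your only addition is the explicit pointer to \ref{app-k-linear} for why a path is a nonzero basis element in $\Bbbk\bar{Q}$, which is a reasonable elaboration of the same idea.
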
  
              
Next we introduce admissible relations and stalk functors.

\begin{dfn}   
  \label{dfn:admissible} 
  A relation $\rho$ in a quiver $Q$, see \ref{app-relations}, is called \emph{admissible} if the coefficient in the linear combination $\rho$ to every trivial path $e_i$ ($i \in Q_0$) is zero. We refer to a quiver with relations $(Q,\c{R})$ as a \emph{quiver with admissible relations} if every relation in $\c{R}$ is admissible.
\end{dfn}

As we shall be interested in right rooted quivers with admissible relations, it seems in order to compare these notions with the more classic notion of admissibility:

\begin{rmk}
According to \cite[Chap.~II.2 Def.~2.1]{ASS1}, a set $\c{R}$ of relations in a quiver $Q$ with finitely many vertices is \emph{admissible} if $\mathfrak{a}^m \subseteq (\c{R}) \subseteq \mathfrak{a}^2$ holds for some $m \geqslant 2$. Here $\mathfrak{a}$ is the \emph{arrow ideal} in $\Bbbk Q$, that is, the two-sided ideal generated by all arrows in $Q$. Note that:
\begin{displaymath}
  \begin{array}{c}
   \textnormal{$\c{R}$ is admissible as in}
   \\
   \textnormal{\cite[Chap.~II.2 Def.~2.1]{ASS1}}   
  \end{array}
  \ \implies \
  \begin{array}{c}
   \textnormal{$\c{R}$ is admissible as in \dfnref{admissible} and}
   \\
   \textnormal{$(Q,\c{R})$ is right rooted as in \dfnref{right-rooted}.}   
  \end{array}  
\end{displaymath}  
Indeed, in terms of the arrow ideal, our definition of admissibility simply means $(\c{R}) \subseteq \mathfrak{a}$\footnote{\,Often, not much interesting comes from considering relations in $\mathfrak{a} \smallsetminus \mathfrak{a}^2$. To illustrate this point, consider e.g.~the Kronecker quiver \smash{$K_2 = \xymatrix@C=1.2pc{\bullet \ar@<0.7ex>[r]|-{a} \ar@<-0.7ex>[r]|-{b} & \bullet}$}\! with one relation $\rho:=a-b \in \mathfrak{a} = (a,b) \subseteq \Bbbk K_2$. Clearly, the category $\Rep{(K_2,\{\rho\})}{\c{A}}$ is equivalent to $\Rep{A_2}{\c{A}}$ where \smash{$A_2 = \xymatrix@C=1pc{\bullet \ar[r] & \bullet}$}. So the representation theory of $(K_2,\{\rho\})$ is already covered by the  representation theory of a quiver (in this case, $A_2$) with relations (in this case, $\c{R}=\emptyset$) contained in the square of the arrow ideal.}, and if there is an inclusion $\mathfrak{a}^m \subseteq (\c{R})$, then \dfnref{right-rooted} holds with (universal) $N=m$.

If $(Q,\c{R})$ is right rooted, one does not necessarily have $\mathfrak{a}^m \subseteq (\c{R})$ for some $m$. Indeed, let $Q$ be a quiver with one vertex and countably many loops $x_1,x_2,\ldots$. For each $\ell \geqslant 1$ let $\c{R}_\ell = \{x_{u_\ell}\cdots x_{u_1}x_\ell \,|\, u_1,\ldots,u_\ell \in \mathbb{N} \}$ be the set of all paths of length $\ell+1$ starting with $x_\ell$. Set $\c{R} = \bigcup_{\ell \geqslant 1} \c{R}_\ell$. Evidently, \smash{$(\c{R}) \subseteq \mathfrak{a}^2 = (x_1,x_2,\ldots)^2$} and $(Q,\c{R})$ is right rooted. As none of the elements $x_1, x_2^2, x_3^3,\ldots$ belong to $(\c{R})$ we have $\mathfrak{a}^m \nsubseteq (\c{R})$ for every $m$.

However, if $Q$ has only finitely many arrows (in addition to having only finitely many vertices), then right rootedness of $(Q,\c{R})$ means precisely that $\mathfrak{a}^m \subseteq (\c{R})$ for some $m$.
\end{rmk}

\begin{dfn}  
  \label{dfn:Si}
   Let $Q$ be a quiver and let $\c{A}$ be an abelian category. For every $i \in Q_0$ there is a \emph{stalk functor} $S_{\mspace{-3mu}i} \colon \c{A} \to \Rep{Q}{\c{A}}$ which assigns to $A \in \c{A}$ the \emph{stalk representation} $S_{\mspace{-3mu}i}\mspace{1mu}A$ given by $(S_{\mspace{-3mu}i}\mspace{1mu}A)(j) = 0$ for every vertex $j \neq i$ in $Q_0$ and $(S_{\mspace{-3mu}i}\mspace{1mu}A)(i) = A$. For every path $p \neq e_i$ in $Q$ one has $(S_{\mspace{-3mu}i}\mspace{1mu}A)(p)=0$  and, of course, $(S_{\mspace{-3mu}i}\mspace{1mu}A)(e_i)=\mathrm{id}_A$.
\end{dfn}   

\begin{rmk}
  \label{rmk:admissible-stalk}
Let $\rho$ be a relation in a quiver $Q$ and let $x_i \in \Bbbk$ be the coefficient (which may or may not be zero) to the path $e_i$ in the linear combination $\rho$. If $A$ is any object in a $\Bbbk$-linear abelian category $\c{A}$, then  $(S_{\mspace{-3mu}i}\mspace{1mu}A)(\rho)=x_i\mspace{2mu}\mathrm{id}_A$. It follows that the stalk representation $S_{\mspace{-3mu}i}\mspace{1mu}A$ satisfies every admissible relation. Thus, if $(Q,\c{R})$ be a quiver with admissible relations, then every $S_{\mspace{-3mu}i}$ can be viewed as a functor $\c{A} \to \Rep{(Q,\c{R})}{\c{A}}$.
\end{rmk}
                                                               
\begin{ipg}  
  \label{Ki}
   Let $(Q,\c{R})$ be a quiver with admissible relations and let $\c{A}$ be a $\Bbbk$-linear abelian category. For every $i \in Q_0$ the stalk functor $S_{\mspace{-3mu}i} \colon \c{A} \to \Rep{(Q,\c{R})}{\c{A}}$ from \rmkref{admissible-stalk} has a right adjoint, namely the functor $K_i \colon \Rep{(Q,\c{R})}{\c{A}} \to \c{A}$ given by
\begin{displaymath}
  K_i\mspace{1mu}X\,=\!\bigcap_{a \colon \mspace{-2mu}i \to j}\!\!\Ker{X(a)}
  \,=\, \Ker{\Big(X(i) \stackrel{\psi^X_i}{\longrightarrow} \!\!\! \prod_{a \colon \mspace{-2mu}i \to j}\!\!\! X(j)\Big)}
  \;,
\end{displaymath}  
where the intersection/product is taken over all arrows $a \colon \mspace{-2mu}i \to j$ in $Q$ with source $i$, and \smash{$\psi^X_i$} is the morphism whose $a^\mathrm{th}$ coordinate function is $X(a) \colon X(i) \to X(j)$. For a quiver without relations ($\c{R}=\emptyset$), the adjunctions $(S_{\mspace{-3mu}i},K_i)$ were established in \cite[Thm.~4.5]{HJ-quiver}, but evidently this also works for quivers with admissible relations.

Note that the existence of $K_i$ requires that the product $\prod_{a \colon \mspace{-2mu}i \to j}$ can be formed in $\c{A}$; this is the case if, for example, $\c{A}$ is complete (satisfies AB3*) or if $\c{A}$ is arbitrary but there are only finitely many arrows in $Q$ with source $i$. We tacitly assume that each $K_i$ exists. 
\end{ipg}

\begin{lem}
  \label{lem:b}
  Let $(Q,\c{R})$ be a quiver with admissible relations, let $\c{A}$ be a $\Bbbk$-linear abelian category, and let $X \in \Rep{(Q,\c{R})}{\c{A}}$. If $X \neq 0$ and $K_iX=0$ holds for all $i \in Q_0$, then there exists an infinite sequence of (not necessarily different) composable arrows in $Q$,
\begin{equation}
  \label{eq:an}
  \begin{gathered}
  \xymatrix@C=1.3pc{
    \bullet \ar[r]^-{a_1} & \bullet \ar[r]^-{a_2} & \bullet \ar[r]^-{a_3} & \cdots
    },
  \end{gathered}    
\end{equation}    
such that $X(a_n\cdots a_1) \neq 0$ for every $n \geqslant 1$. In particular, if $(Q,\c{R})$ is right rooted and $X \neq 0$, then $K_iX \neq 0$ for some $i \in Q_0$.
\end{lem}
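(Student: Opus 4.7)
The plan is to construct the infinite sequence of arrows by induction, exploiting the hypothesis $K_iX = 0$ to find, at each stage, a new arrow that keeps the composite $X(a_n \cdots a_1)$ non-zero. The ``in particular'' statement is then a one-line contradiction using the right-rootedness of $(Q,\c{R})$.

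First I would pick a starting vertex: since $X \neq 0$, there exists $i_0 \in Q_0$ with $X(i_0) \neq 0$, and I set $\varphi_0 := \mathrm{id}_{X(i_0)} \neq 0$. For the inductive step, assume we have chosen composable arrows $a_1,\ldots,a_n$ forming a path from $i_0$ to some vertex $i_n$ and such that the morphism
\begin{displaymath}
  \varphi_n \;:=\; X(a_n \cdots a_1) \colon X(i_0) \longrightarrow X(i_n)
\end{displaymath}
is non-zero. The hypothesis $K_{i_n}X = 0$ says precisely that the morphism
\begin{displaymath}
  \psi^X_{i_n} \colon X(i_n) \longrightarrow \!\!\! \prod_{a \colon i_n \to j} \!\!\! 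X(j)
\end{displaymath}
is a monomorphism (its kernel is $K_{i_n}X$). Hence $\psi^X_{i_n} \circ \varphi_n \neq 0$, and therefore at least one coordinate of this composite is non-zero; that is, there exists an arrow $a_{n+1} \colon i_n \to i_{n+1}$ such that $X(a_{n+1}) \circ \varphi_n = X(a_{n+1}\cdots a_1) \neq 0$, which is exactly what we need. Iterating produces the desired infinite sequence \eqref{an}.

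For the ``in particular'' statement, suppose $(Q,\c{R})$ is right rooted and $X \neq 0$, yet $K_iX = 0$ for every $i \in Q_0$. Applying what was just proved yields an infinite composable sequence as in \eqref{an} with $X(a_n\cdots a_1) \neq 0$ for all $n$. By \dfnref{right-rooted} there exists $N$ with $a_N\cdots a_1 \in (\c{R})$; but since $X$ is a representation of $(Q,\c{R})$, every element of $(\c{R})$ is sent to $0$ by $X$, so $X(a_N\cdots a_1) = 0$, a contradiction.

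The main (and really only) subtle point is the inductive step: one must make sure to use the monomorphism characterization of $K_{i_n}X = 0$ correctly, so that the non-vanishing of $\varphi_n$ forces the non-vanishing of at least one $X(a_{n+1}) \circ \varphi_n$ rather than merely of the arrows $X(a_{n+1})$ themselves. Once this categorical reformulation is in hand, the argument is straightforward.
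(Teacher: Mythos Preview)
Your proof is correct and follows essentially the same approach as the paper: both construct the sequence inductively by using $K_{i_n}X=0$ to find an arrow $a_{n+1}$ out of $i_n$ with $X(a_{n+1})\circ X(a_n\cdots a_1)\neq 0$, and both derive the ``in particular'' claim by contradiction with right rootedness. The only cosmetic difference is that the paper phrases the inductive step via images and kernels (arguing $\Im X(a_n\cdots a_1)\nsubseteq \Ker X(a_{n+1})$), whereas you phrase it via the monicity of $\psi^X_{i_n}$ and the non-vanishing of a coordinate; these are equivalent formulations of the same observation.
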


\begin{proof}
  As $X \neq 0$ we have $X(i_1) \neq 0$ for some vertex $i_1$. As $K_{i_1}\mspace{-1mu}X=0$ we have $X(i_1) \nsubseteq K_{i_1}\mspace{-1mu}X$ so there is at least one arrow $a_1 \colon i_1 \to i_2$ with $X(i_1) \nsubseteq \Ker{X(a_1)}$, and hence $X(a_1) \neq 0$. As $0 \neq \Im{X(a_1)} \subseteq X(i_2)$ and $K_{i_2}\mspace{-1mu}X=0$ we have $\Im{X(a_1)} \nsubseteq K_{i_2}\mspace{-1mu}X$, so there is at least one arrow $a_2 \colon i_2 \to i_3$ such that $\Im{X(a_1)} \nsubseteq \Ker{X(a_2)}$. This means that $X(a_2) \circ X(a_1) = X(a_2a_1)$ is non-zero. Continuing in this manner, the first assertion in the lemma follows.
  
For the second assertion, assume that there is some $X \neq 0$ with $K_iX=0$ for all $i \in Q_0$.  By the first assertion there exists an infinite sequence of composable arrows \eqref{an} such that $X(a_n\cdots a_1) \neq 0$ for every $n \geqslant 1$. Hence $a_n\cdots a_1 \notin (\c{R}) \subseteq \Bbbk \bar{Q}$ holds for every $n \geqslant 1$ by the lower equivalence in the diagram in \ref{app-relations} Thus
$(Q,\c{R})$ is not right rooted.
\end{proof}

The result below shows that for a \textsl{right rooted} quiver with \textsl{admissible} relations $(Q,\c{R})$, the atom spectrum of $\Rep{(Q,\c{R})}{\c{A}}$ depends only on the atom spectrum of $\c{A}$ and on the (cardinal) number of vertices in $Q$. The arrows and the relations in $Q$ play no (further)~role! 

\enlargethispage{6ex}

\begin{thm}
  \label{thm:quiver}
    Let $(Q,\c{R})$ be a quiver with admissible relations and let $\c{A}$ be any $\Bbbk$-linear abelian category. There is an injective, continuous, open, and order-preserving map,
\begin{displaymath}  
  f \colon \textstyle\bigsqcup_{i \in Q_0} \ASpec{\c{A}} \longrightarrow \ASpecp{\Rep{(Q,\c{R})}{\c{A}}}\;,
\end{displaymath}  
given by $\textnormal{($i^\mathrm{th}$ copy of $\ASpec{\c{A}}$) $\ni \a{H}$} 
  \,\longmapsto\, \a{S_{\mspace{-3mu}i}\mspace{1mu}H}$. If, in addition, $(Q,\c{R})$ is right rooted, then $f$ is also surjective, and hence it is a homeomorphism and an order-isomorphism.
\end{thm}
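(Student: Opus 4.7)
The plan is to apply Theorem \ref{thm:bijective} to the family of stalk functors $S_{\mspace{-3mu}i} \colon \c{A} \to \Rep{(Q,\c{R})}{\c{A}}$ indexed by $i \in Q_0$. To do so, I need to verify that each $S_{\mspace{-3mu}i}$ is full, faithful, exact, and lifts subobjects, that condition (a) of \thmref{bijective} holds, and that, in the right rooted case, condition (b) holds with right adjoints $K_i$ from \ref{Ki}.

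First I would check that each stalk functor $S_{\mspace{-3mu}i}$ is fully faithful and satisfies the hypotheses of \prpref{ASpecF}. Faithfulness and fullness are immediate from \dfnref{Si}: a morphism $S_{\mspace{-3mu}i}\mspace{1mu}A \to S_{\mspace{-3mu}i}\mspace{1mu}A'$ is zero at every vertex $j\neq i$ and is an arbitrary morphism $A \to A'$ at vertex $i$, so such morphisms are in natural bijection with $\Hom{\c{A}}{A}{A'}$. Exactness and the subobject-lifting property are best handled via \lemref{referee-comment}: since kernels, cokernels, and subobjects in $\Rep{(Q,\c{R})}{\c{A}}$ are formed vertex-wise, any subrepresentation $Y \rightarrowtail S_{\mspace{-3mu}i}\mspace{1mu}A$ satisfies $Y(j) \subseteq (S_{\mspace{-3mu}i}\mspace{1mu}A)(j) = 0$ for $j \neq i$ and $Y(i) \subseteq A$; as the structure maps of $Y$ land in zero objects, $Y$ is of the form $S_{\mspace{-3mu}i}\mspace{1mu}A'$ for the subobject $A' := Y(i) \subseteq A$. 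Hence $\EssIm{S_{\mspace{-3mu}i}}$ is closed under subobjects, and \lemref{referee-comment} yields exactness together with the lifting property.

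Next I would verify condition (a). Suppose $i \neq j$ and let $Y$ be a common subobject of $S_{\mspace{-3mu}i}\mspace{1mu}A_i$ and $S_{\mspace{-4mu}j}\mspace{1mu}A_{\mspace{-2mu}j}$. Evaluating at any vertex $k \in Q_0$, we have $Y(k) \subseteq (S_{\mspace{-3mu}i}\mspace{1mu}A_i)(k)$, which is $0$ unless $k = i$, and $Y(k) \subseteq (S_{\mspace{-4mu}j}\mspace{1mu}A_{\mspace{-2mu}j})(k)$, which is $0$ unless $k = j$. Since $i \neq j$, we get $Y(k)=0$ for every $k$, so $Y = 0$. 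This gives condition (a) and hence injectivity of $f$ already in the general (not necessarily right rooted) case; combined with the application of \prpref{f}, this yields the first assertion of the theorem.

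For the right rooted case, I would invoke the adjunctions $(S_{\mspace{-3mu}i},K_i)$ from \ref{Ki} and verify condition (b) of \thmref{bijective}. This is precisely the second assertion of \lemref{b}: if $(Q,\c{R})$ is right rooted and $X \neq 0$ in $\Rep{(Q,\c{R})}{\c{A}}$, then $K_iX \neq 0$ for some $i \in Q_0$. With (a) and (b) both in hand, \thmref{bijective} promotes $f$ to a homeomorphism and order-isomorphism. I do not expect any serious obstacle; the only subtle point is the careful verification that $S_{\mspace{-3mu}i}$ lifts subobjects and is exact, but this is made transparent by the vertex-wise formation of kernels and cokernels in $\Rep{(Q,\c{R})}{\c{A}}$, together with \lemref{referee-comment}.
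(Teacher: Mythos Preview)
Your proposal is correct and follows essentially the same approach as the paper: apply \thmref{bijective} to the stalk functors $S_{\mspace{-3mu}i}$ with right adjoints $K_i$, verify condition (a) directly, and in the right rooted case invoke \lemref{b} for condition (b). The only cosmetic difference is that the paper deduces fullness and faithfulness of $S_{\mspace{-3mu}i}$ from the fact that the unit $\mathrm{id}_{\c{A}} \to K_i\mspace{1mu}S_{\mspace{-3mu}i}$ is an isomorphism (via the dual of \cite[IV.3, Thm.~1]{Mac}), and argues exactness and subobject-lifting directly rather than through \lemref{referee-comment}; both routes are straightforward.
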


\begin{proof}
 We apply \thmref{bijective} to the functors $F_{\mspace{-1mu}i}=S_{\mspace{-3mu}i}$ and $G_i=K_i$ ($i \in Q_0$) from \dfnref[]{Si} and \ref{Ki}. The functor $S_{\mspace{-3mu}i}$ is obviously exact, and it also lifts subobjects as every subobject of $S_{\mspace{-3mu}i}\mspace{1mu}A$ has the form $S_{\mspace{-3mu}i}\mspace{1mu}A'$ for a subobject $A' \rightarrowtail A$ in $\c{A}$. It is immediate from the definitions that the~unit $\mathrm{id}_{\c{A}} \to K_i\mspace{1mu}S_{\mspace{-3mu}i}$ of the adjunction $(S_{\mspace{-3mu}i},K_i)$ is an isomorphism, and hence $S_{\mspace{-3mu}i}$ is full and faithful by (the dual of) \cite[IV.3, Thm.~1]{Mac}. Hence the functors $S_{\mspace{-3mu}i}$ meet the requirements in \prpref{f} and we get that $f$ is well-defined, continuous, open, and order-preserving. 
 
 Evidently condition (a) in \thmref{bijective} holds, so $f$ is injective. If $(Q,\c{R})$ is right rooted, then \lemref{b} shows that condition (b) in \thmref{bijective} holds, so $f$ is surjective.
\end{proof}
      
\begin{exa}
  \label{exa:Ch}
  The quiver (without relations):
  \begin{displaymath}
    A^\infty_\infty \,\colon \ \     
    \begin{gathered}        
    \xymatrix@!=0.5pc{
      \cdots \ar[r] & 
      \underset{2}{\bullet} \ar[r]^-{d_2} &       
      \underset{1}{\bullet} \ar[r]^-{d_1} & 
      \underset{0}{\bullet} \ar[r]^-{d_0} &
      \underset{-1}{\bullet} \ar[r]^-{d_{-1}} &      
      \underset{-2}{\bullet} \ar[r]^-{d_{-2}} &            
      \cdots
    }
    \end{gathered}        
  \end{displaymath}  
  is not right rooted, but when equipped with the admissible relations $\c{R} = \{d_{n-1}d_n \,|\, n \in \mathbb{Z}\}$ it becomes right rooted. For any ($\mathbb{Z}$-linear) abelian category $\c{A}$, the category $\Rep{(A^\infty_\infty,\c{R})}{\c{A}}$ is equivalent to the category $\Ch{\c{A}}$ of chain complexes in $\c{A}$. Hence \thmref{quiver} yields a homeomorphism and an order-isomorphism
\begin{displaymath}  
  \textstyle\bigsqcup_{i \in \mathbb{Z}}\, \ASpec{\c{A}} \longrightarrow \ASpecp{\Ch{\c{A}}}
\end{displaymath}  
given by $\textnormal{($i^\mathrm{th}$ copy of $\ASpec{\c{A}}$) $\ni \a{H}$}
  \,\longmapsto\, \a{\cdots \to 0 \to 0 \to H \to 0 \to 0 \to \cdots}$ with $H$ in degree $i$ and zero in all other degrees.  
\end{exa}

The next goal is to apply \thmref{quiver} to prove Theorem~A from the Introduction.
      
\begin{dfn}
  \label{dfn:p-tilde}
Let $Q$ be a quiver with finitely many vertices. For every ideal $\mathfrak{p}$ in $\Bbbk$ and every vertex $i$ in $Q$ set $\tilde{\mathfrak{p}}(i) = \{\mspace{1mu} \xi \in \Bbbk Q \mspace{3mu}|\mspace{3mu} \textnormal{the coefficient to $e_i$ in $\xi$ belongs to $\mathfrak{p}$} \mspace{1mu}\}$.
\end{dfn}

\begin{lem}
  \label{lem:pi-ideal}
  With the notation above, the set $\tilde{\mathfrak{p}}(i)$ is a two-sided ideal in $\Bbbk Q$ which contains every admissible relation.
\end{lem}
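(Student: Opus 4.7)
The lemma has two assertions: that $\tilde{\mathfrak{p}}(i)$ is a two-sided ideal in $\Bbbk Q$, and that it contains every admissible relation. My plan is to dispatch both by analyzing how the ``coefficient of $e_i$'' functional on $\Bbbk Q$ behaves under addition and multiplication.

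First I would introduce, for a typical element $\xi = \sum_p c_p p \in \Bbbk Q$ (a finite $\Bbbk$-linear combination of paths in $Q$), the notation $[\xi]_i := c_{e_i}$; then by definition $\tilde{\mathfrak{p}}(i) = \{\xi \in \Bbbk Q \mid [\xi]_i \in \mathfrak{p}\}$. Since $\xi \mapsto [\xi]_i$ is $\Bbbk$-linear, $\tilde{\mathfrak{p}}(i)$ is automatically a $\Bbbk$-submodule of $\Bbbk Q$. The real content is therefore closure under multiplication by arbitrary elements of $\Bbbk Q$ on both sides.

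For that, I would record the following elementary observation about paths in $Q$: the product $pq$ of two paths is nonzero only when they are composable, and then its length equals $\ell(p)+\ell(q)$; in particular $pq$ is a trivial path if and only if $p = q = e_j$ for some vertex $j$, in which case $pq = e_j$. Extending bilinearly over $\Bbbk$, this yields the multiplicativity
\begin{equation*}
  [\alpha\beta]_i \,=\, [\alpha]_i\mspace{2mu}[\beta]_i
  \qquad\text{for all } \alpha,\beta \in \Bbbk Q.
\end{equation*}
Given this identity, for $\alpha \in \Bbbk Q$ and $\xi \in \tilde{\mathfrak{p}}(i)$ both $[\alpha\xi]_i$ and $[\xi\alpha]_i$ lie in $\mathfrak{p}$ (as $\mathfrak{p}$ is an ideal of $\Bbbk$), so $\alpha\xi, \xi\alpha \in \tilde{\mathfrak{p}}(i)$, which finishes the ideal claim.

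For the second assertion, if $\rho \in \c{R}$ is admissible, then by \dfnref{admissible} the coefficient of every trivial path in $\rho$ is zero; in particular $[\rho]_i = 0 \in \mathfrak{p}$, so $\rho \in \tilde{\mathfrak{p}}(i)$. The whole argument is routine; the only place demanding a moment's care is the path-length bookkeeping used to justify the multiplicativity formula, and even there the underlying observation is essentially trivial.
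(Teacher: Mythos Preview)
Your proof is correct and follows essentially the same idea as the paper's: both rest on the observation that a product of two paths equals $e_i$ only when both factors are $e_i$. The paper phrases this path-by-path (for $p \neq e_i$ the coefficient of $e_i$ in $p\xi$ and $\xi p$ vanishes, while multiplication by $e_i$ preserves that coefficient), whereas you package the same computation as the single multiplicative identity $[\alpha\beta]_i = [\alpha]_i[\beta]_i$, which is a slightly cleaner formulation but not a different argument.
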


\begin{proof}
  Let $p \neq e_i$ be a path in $Q$ and let $\xi$ be an element in $\Bbbk Q$. In the linear combinations $p\xi$ and $\xi p$ the coefficient to $e_i$ is zero. In the linear combinations $e_i\xi$ and $\xi e_i$ the coefficient to $e_i$ is the same as the  coefficient to $e_i$ in the given element $\xi$. Hence $\tilde{\mathfrak{p}}(i)$ is a two-sided ideal in $\Bbbk Q$. By \dfnref{admissible}, every admissible relation belongs to $\tilde{\mathfrak{p}}(i)$.
\end{proof}

\begin{proof}[Proof of Theorem~A]
Let $\tilde{f}$ be the map defined by commutativity of the diagram:
\begin{equation}
  \label{eq:tilde-f}
  \begin{gathered}
  \xymatrix{
    \bigsqcup_{i \in Q_0}\Spec{\Bbbk} \ar@{..}[r]\ar@{..}[r]\ar@{..}[r]\ar@{..>}[r]^-{\tilde{f}}
    \ar[d]_-{\bigsqcup_{i \in Q_0}q}^-{\sim} & 
    \ASpecp{\Mod{\Bbbk Q/I}}
    \\
    \bigsqcup_{i \in Q_0}\ASpecp{\Mod{\Bbbk}} \ar[r]^-{f} & 
    \ASpecp{\Rep{(Q,\c{R})}{\Mod{\Bbbk}}}\;.\ar[u]^-{\sim}_-{\ASpec{U}}
  }
  \end{gathered}  
\end{equation} 
Here the lower horizontal map is the map from \thmref{quiver} with $\c{A}=\Mod{\Bbbk}$; the left vertical map is the order-isomorphism and homeomorphism described in \ref{Spec}; and the right vertical map is induced by the equivalence of categories $U \colon \Rep{(Q,\c{R})}{\Mod{\Bbbk}} \to \Mod{\Bbbk Q/I}$ given in \ref{app-path-algebra}. An element $\mathfrak{p} \in \textnormal{($i^\mathrm{\,th}$ copy of $\Spec{\Bbbk}$)}$ is by \smash{$\bigsqcup_{i \in Q_0}q$} mapped to the atom $\a{\Bbbk/\mathfrak{p}} \in \textnormal{($i^\mathrm{\,th}$ copy of $\ASpecp{\Mod{\Bbbk}}$)}$, which by $f$ is mapped to the atom $\a{S_{\mspace{-3mu}i}(\Bbbk/\mathfrak{p})}$. The functor $U$ maps the representation $S_{\mspace{-3mu}i}(\Bbbk/\mathfrak{p})$ to the left $\Bbbk Q/I$-module (= a left $\Bbbk Q$-module killed by $I$) whose underlying $\Bbbk$-module is $\Bbbk/\mathfrak{p}$ (more precisely, $0 \oplus \cdots \oplus 0 \oplus \Bbbk/\mathfrak{p} \oplus 0 \oplus \cdots \oplus 0$ with a ``$0$'' for each vertex $\neq i$) on which $e_i$ acts as the identity and $p \cdot \Bbbk/\mathfrak{p} = 0$ for all paths $p \neq e_i$. This means that the left $\Bbbk Q/I$-module $U(S_{\mspace{-3mu}i}(\Bbbk/\mathfrak{p}))$ is isomorphic to $\Bbbk Q / \tilde{\mathfrak{p}}(i)$. Indeed, $\Bbbk Q / \tilde{\mathfrak{p}}(i)$ \textsl{is} a $\Bbbk Q/I$-module as $I \subseteq \tilde{\mathfrak{p}}(i)$ by \lemref{pi-ideal}; and as a $\Bbbk$-module it is isomorphic to $\Bbbk/\mathfrak{p}$ since the $\Bbbk$-linear map $\Bbbk Q \to \Bbbk/\mathfrak{p}$ given by $\xi \mapsto [\textnormal{(coefficient to $e_i$ in $\xi$)}]$ has kernel $\tilde{\mathfrak{p}}(i)$. As noted in the proof of \lemref{pi-ideal}, every path $p \neq e_i$ multiplies $\Bbbk Q$ into $\tilde{\mathfrak{p}}(i)$, so one has $p \cdot \Bbbk Q / \tilde{\mathfrak{p}}(i) = 0$, and $e_i$ acts as the identity on $\Bbbk Q / \tilde{\mathfrak{p}}(i)$. Having proved the isomorphism $U(S_{\mspace{-3mu}i}(\Bbbk/\mathfrak{p})) \cong \Bbbk Q / \tilde{\mathfrak{p}}(i)$, it follows that $\tilde{f}(\mathfrak{p}) = \a{\Bbbk Q / \tilde{\mathfrak{p}}(i)}$. Thus $\tilde{f}$ acts as described in the theorem. The assertions about $\tilde{f}$ follow from the commutative diagram \eqref{tilde-f} and from the properties of the map $f$ given in \thmref{quiver}.
\end{proof}

Below we examine the map $\tilde{f}$ from Theorem~A in some concrete examples. 
        
\begin{exa}
  \label{exa:subspace}
Consider the $(n-\!1)$-subspace quiver (no relations), which is right rooted:
  \begin{displaymath}
    \upSigma_n : \qquad
    \begin{gathered}
    \xymatrix@R=0.7pc@C=1.5pc{
    {} & {} & \overset{n}{\bullet} & {}
    \\
    \underset{1}{\bullet} \ar[urr]^-{a_1} & \underset{2}{\bullet} \ar[ur]_-{a_2} & \cdots & \underset{n-1}{\bullet} \ar[ul]_-{a_{n-1}}
    }
    \end{gathered}    
  \end{displaymath}  
  The path algebra $\Bbbk\upSigma_n$ is isomorphic to the following $\Bbbk$-subalgebra of $\mathrm{M}_n(\Bbbk)$:
\begin{displaymath}
  \mathrm{L}_n(\Bbbk) \,=\, \{\, (x_{ij}) \in \mathrm{M}_n(\Bbbk) \ | \ x_{ij} = 0 \textnormal{ if $i \neq n$ and $i \neq j$} \ \}\;.
\end{displaymath}  
 Under this isomorphism the arrow $a_i$ in $\upSigma_n$ corresponds to the matrix $\alpha_i \in \mathrm{L}_n(\Bbbk)$ with $1$ in entry $(n,i)$ and $0$ elsewhere, and the trivial path $e_i$ corresponds to the matrix $\varepsilon_i \in \mathrm{L}_n(\Bbbk)$ with $1$ in entry $(i,i)$ and $0$ elsewhere. It follows that, via this isomorphism, the ideal $\tilde{\mathfrak{p}}(i) \subseteq \Bbbk\upSigma_n$ from \dfnref{p-tilde} is identified with the ideal
\begin{displaymath}
  \bar{\mathfrak{p}}(i) = 
  {\renewcommand{\arraystretch}{0.0} 
  \setlength{\arraycolsep}{2.5pt} 
  \left(
    \begin{array}{ccccc}
       \Bbbk & {} & {} & {} & {} \\
       {} & \ddots & {} & \,\,\text{\LARGE \raisebox{1ex}{$0$}} & {} \\
       \text{\LARGE $0$} & {} & \mathfrak{p} & {} & {} \\ 
       {} & {} & {} & \text{\raisebox{1ex}{$\ddots$}} & {} \\
       \Bbbk & \cdots & \Bbbk & \cdots & \Bbbk 
  \end{array}\right)
  } \subseteq\, \mathrm{L}_n(\Bbbk)\;.
  \vspace*{0.5ex}
\end{displaymath}
Now Theorem~A yields a homeomorphism and an order-isomorphism,
  \begin{displaymath}
    \textstyle\bigsqcup_{i=1}^n\Spec{\Bbbk} \longrightarrow \ASpecp{\Mod{\mathrm{L}_n(\Bbbk)}}
  \end{displaymath} 
given by $\textnormal{($i^\mathrm{th}$ copy of $\Spec{\Bbbk}$) $\ni \mathfrak{p}$} 
  \,\longmapsto\, \a{\mspace{2mu}\mathrm{L}_n(\Bbbk)/\mspace{2mu}\bar{\mathfrak{p}}(i)\mspace{1mu}}$.
\end{exa}

\begin{exa}
  \label{exa:free-algebra}
  Let $Q$ be any quiver with finitely many vertices. Let $m>0$ be any natural number and let $\c{P}_m$ be the relations consisting of all paths in $Q$ of length $m$. Clearly these relations are admissible and $(Q,\c{P}_m)$ is right rooted. If $\mathfrak{a}$ denotes the arrow ideal in $\Bbbk Q$, then $(\c{P}_m) = \mathfrak{a}^m$, so Theorem~A yields a homeomorphism and an order-isomorphism,
\begin{displaymath}  
  \textstyle\bigsqcup_{i \in Q_0} \Spec{\Bbbk} \longrightarrow \ASpecp{\Mod{\Bbbk Q/\mathfrak{a}^m}}\;,
\end{displaymath} 
given by  $\textnormal{($i^\mathrm{th}$ copy of $\Spec{\Bbbk}$) $\ni \mathfrak{p}$} 
  \,\longmapsto\, \a{\mspace{2mu}\Bbbk Q/\mspace{2mu}\tilde{\mathfrak{p}}(i)\mspace{1mu}}$. In the special case where $Q$ is the quiver with one vertex and $n$ loops $x_1,\ldots,x_n$ one has $\Bbbk Q = \Bbbk\langle x_1,\ldots,x_n \rangle$, the free $\Bbbk$-algebra. More\-over, for $\mathfrak{p} \in \Spec{\Bbbk}$ we have $\tilde{\mathfrak{p}} = \mathfrak{p} + (x_1,\ldots,x_n)$ and hence $\Bbbk Q/\mspace{2mu}\tilde{\mathfrak{p}} \cong \Bbbk/\mathfrak{p}$, which is a module over $\Bbbk\langle x_1,\ldots,x_n \rangle$ where all the variables $x_1,\ldots,x_n$ act as zero. Thus the map
\begin{displaymath}  
  \Spec{\Bbbk} \longrightarrow \ASpecp{\Mod{\Bbbk\langle x_1,\ldots,x_n \rangle/(x_1,\ldots,x_n)^m}}
\end{displaymath} 
given by $\Spec{\Bbbk} \ni \mathfrak{p} \,\longmapsto\, \a{\Bbbk/\mathfrak{p}}$ is a homeomorphism and an order-isomorphism.    
\end{exa}

We end with an example illustrating the necessity of the assumptions in Theorem~A. We shall see that the map $\tilde{f}$ need not be surjective if $(Q,\c{R})$ is not right rooted and that the  situation is more subtle when the relations are not admissible.

\begin{exa}
  \label{exa:poly}
  Consider the Jordan quiver (which is not right rooted): \vspace*{0.6ex}
  \begin{displaymath}
    J : \quad
    \xymatrix@C=-1.2ex{
    \bullet & \ar@(ur,dr)[]^X
    } \vspace*{0.6ex}
  \end{displaymath}
  The path algebra $\Bbbk J$ is isomorphic to the polynomial ring $\Bbbk[X]$, which is commutative, so via the homeomorphism and order-isomorphism $q \colon \Spec{\Bbbk[X]} \to \ASpecp{\Mod{\Bbbk[X]}}$ in \ref{Spec}, the map $\tilde{f} \colon \Spec{\Bbbk} \to \ASpecp{\Mod{\Bbbk[X]}}$ from Theorem~A may be identified with a map
  \begin{displaymath}
    \Spec{\Bbbk} \longrightarrow \Spec{\Bbbk[X]}\;.
  \end{displaymath} 
  It is not hard to see that this map is given by \mbox{$\mathfrak{p} \mapsto \mathfrak{p}+(X) = \{f(X) \in \Bbbk[X]\,|\, f(0) \in \mathfrak{p}\}$}, so it is injective but not surjective. Typical prime ideals in $\Bbbk[X]$ that are not of the form $\mathfrak{p}+(X)$ are $\mathfrak{q}[X]$ where $\mathfrak{q} \in \Spec{\Bbbk}$.  
  Also notice that for the Jordan quiver, the functor from \ref{Ki},
\begin{displaymath}
  \xymatrix{
    \Mod{\Bbbk[X]} \,\simeq\, \Rep{J\,}{\Mod{\Bbbk}} \ar[r]^-{K} & \Mod{\Bbbk}
  },
\end{displaymath}  
 maps a $\Bbbk[X]$-module $M$ to \smash{$KM = \Ker{\big(M \stackrel{X\cdot}{\longrightarrow} M\big)}$}. Thus it may happen that $KM = 0$ (if multiplication by $X$ on $M$ is injective) even though $M \neq 0$. This also shows that the last assertion in \lemref{b} can fail for quivers that are not right rooted.
 
Now let $\Bbbk=\mathbb{Z}$ and consider e.g.~the relations $\c{R}=\{X^3,2\}$ in $J$ (where ``$2$'' means two times the trivial path on the unique vertex in $J$). Then $(J,\c{R})$ is right rooted because of the relation $X^3$, however, the relation $2$ is not admissible. In this case,
\begin{displaymath}
    \Rep{(J,\c{R})}{\Mod{\mathbb{Z}}} \,\cong\, \Mod{\mathbb{Z}[X]/(X^3,2)} \,=\, \Mod{\mathbb{F}_2[X]/(X^3)}\;,
\end{displaymath}  
so $\ASpecp{\Rep{(J,\c{R})}{\Mod{\mathbb{Z}}}}$ consists of a single element. This set is not even equipotent to $\Spec{\mathbb{Z}}$, in particular, there exists no homeomorphism or order-isomorphism between $\ASpecp{\Rep{(J,\c{R})}{\Mod{\mathbb{Z}}}}$ and $\Spec{\mathbb{Z}}$.
\end{exa}

\section{Application to comma categories}
\label{sec:Comma}

In this section, we consider the \emph{comma category} $\com{U}{V}$, see \cite[II.6]{Mac}, associated to a pair of additive functors between abelian categories:
\begin{displaymath}
  \xymatrix{
    \c{A} \ar[r]^{U} & \c{C} & \c{B} \ar[l]_-{V}
  }.
\end{displaymath}
An object in $\com{U}{V}$ is a triple $(A,B,\theta)$ where $A \in \c{A}$, $B \in \c{B}$ are objects and $\theta \colon UA \to VB$ is a morphism in $\c{C}$. A morphism $(A,B,\theta) \to (A',B',\theta')$ in $\com{U}{V}$ is a pair of morphisms $(\alpha,\beta)$, where $\alpha \colon A \to A'$ is a morphism in $\c{A}$ and $\beta \colon B \to B'$ is a morphism in $\c{B}$, such that the following diagram commutes:
\begin{displaymath}
  \xymatrix{
    UA \ar[r]^-{U\alpha} \ar[d]_{\theta} & UA'\;\phantom{.} \ar@<-2pt>[d]^-{\theta'}
    \\
    VB \ar[r]^{V\beta} & VB'\;.    
  }
\end{displaymath}
The comma category arising from the special case \!\smash{$\xymatrix@C=1.3pc{\c{A} \ar[r]^{U} & \c{B} & \c{B} \ar[l]_-{\mathrm{id}_{\c{B}}}}$}\! is written $\com{U}{\c{B}}$.

\vspace*{1ex}

The notion and the theory of atoms only make sense in abelian categories. In general, the comma category is \textsl{not} abelian---not even if the categories $\c{A}$, $\c{B}$, and $\c{C}$ are abelian and the functors $U$ and $V$ are additive, as we have assumed. However, under weak assumptions, $\com{U}{V}$ \textsl{is} abelian, as we now prove. Two special cases of the following result can be found in \cite[Prop.~1.1 and remarks on p.~6]{FGR-75}, namely the cases where $U$ or $V$ is the identity functor.

\begin{prp}
  \label{prp:abelian}
  If $U$ is right exact and $V$ is left exact, then $\com{U}{V}$ is abelian.
\end{prp}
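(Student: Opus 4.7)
The plan is to verify the abelian-category axioms componentwise, and see that the two exactness hypotheses are exactly what is needed to produce a third coordinate for kernels and cokernels. The additive structure is immediate: the zero object is $(0,0,0)$, the biproduct of $(A,B,\theta)$ and $(A',B',\theta')$ is $(A \oplus A',\,B \oplus B',\,\theta \oplus \theta')$ (which makes sense since $U$ and $V$ are additive and hence commute with finite direct sums), and hom-sets become abelian groups via componentwise addition, which remains well-defined on morphisms of $\com{U}{V}$ since $U\alpha$ and $V\beta$ depend additively on $\alpha$ and $\beta$.

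Given a morphism $(\alpha,\beta) \colon (A,B,\theta) \to (A',B',\theta')$, I would form the kernel by taking $\Ker{\alpha}$ and $\Ker{\beta}$ in $\c{A}$ and $\c{B}$, with the canonical inclusions $(\iota_\alpha,\iota_\beta)$, and build the connecting map $\psi \colon U\Ker{\alpha} \to V\Ker{\beta}$ as follows. By left exactness of $V$, the morphism $V\iota_\beta$ identifies $V\Ker{\beta}$ with $\Ker{(V\beta)}$; since $V\beta \circ \theta \circ U\iota_\alpha = \theta' \circ U\alpha \circ U\iota_\alpha = 0$, the composite $\theta \circ U\iota_\alpha$ factors uniquely through $V\iota_\beta$, yielding $\psi$. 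A short chase, using that $V\iota_\beta$ is monic, shows that $(\Ker{\alpha},\Ker{\beta},\psi)$ together with $(\iota_\alpha,\iota_\beta)$ satisfies the universal property of the kernel. Dually, right exactness of $U$ identifies the cokernel of $U\alpha$ with $U$ applied to the cokernel of $\alpha$, which lets one construct the connecting map for a cokernel object of the analogous form, enjoying the analogous universal property.

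To finish, I would verify that the canonical morphism from the coimage to the image of $(\alpha,\beta)$ is an isomorphism. Because kernels and cokernels are computed on the $\c{A}$- and $\c{B}$-coordinates, so are images and coimages, and the canonical comparison morphism has both components equal to the corresponding canonical isomorphisms in the abelian categories $\c{A}$ and $\c{B}$. A morphism of $\com{U}{V}$ whose two components are isomorphisms is itself an isomorphism (its componentwise inverse automatically satisfies the defining commuting square), so the last axiom holds. The main subtlety I anticipate is precisely the step where a factoring pair obtained from the universal properties in $\c{A}$ and $\c{B}$ must be shown to assemble into a \emph{bona fide} morphism of $\com{U}{V}$, i.e., to satisfy the defining square; this is where the one-sided exactness of $V$ and $U$ (giving, respectively, monicity of $V\iota_\beta$ and epicity of the analogous $U$-projection) enters essentially.
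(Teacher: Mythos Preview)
Your proposal is correct and follows essentially the same approach as the paper: both construct kernels and cokernels componentwise, using left exactness of $V$ (respectively right exactness of $U$) to produce the connecting morphism, and both identify the key subtlety as verifying that the universally-obtained factoring pair is a genuine morphism of $\com{U}{V}$. The only difference is in the final axiom: the paper checks that every monomorphism is a kernel (and dually), whereas you verify that the canonical coimage-to-image comparison is an isomorphism; these are standard equivalent formulations and your argument for it is fine.
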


\begin{proof}
  It is straightforward to see that $\com{U}{V}$ is an additive category. 
  
  We now show that every morphism $(\alpha,\beta) \colon (A,B,\theta) \to (A',B',\theta')$ in $\com{U}{V}$ has a kernel. Let $\kappa \colon K \to A$ be a kernel of $\alpha$ and let $\lambda \colon L \to B$ be a kernel of $\beta$. As $V$ is left exact, the morphism $V\lambda \colon VL \to VB$ is a kernel of $V\beta$, so there is a (unique) morphism $\vartheta$ that makes the following diagram commute:
\begin{equation}
  \label{eq:kernel}
  \begin{gathered}
  \xymatrix{
    {} & UK \ar@{..}[d]\ar@{..}[d]\ar@{..}[d]\ar@{..>}[d]^-{\vartheta} \ar[r]^-{U\kappa} & UA \ar[r]^-{U\alpha} \ar[d]^-{\theta} & UA'\;\phantom{.} \ar@<-2pt>[d]^-{\theta'}
    \\
    0 \ar[r] & VL \ar[r]^-{V\lambda} & VB \ar[r]^-{V\beta} & VB'\;.
  }
  \end{gathered}  
\end{equation}  
We claim that $(\kappa,\lambda) \colon (K,L,\vartheta) \to (A,B,\theta)$ is a kernel of $(\alpha,\beta)$. By construction, the composition $(\alpha,\beta) \circ (\kappa,\lambda)$ is zero. Let $(\kappa',\lambda') \colon (K',L',\vartheta') \to (A,B,\theta)$ be any  morphism in $\com{U}{V}$ such that $(\alpha,\beta) \circ (\kappa',\lambda')$ is zero. We must show that $(\kappa',\lambda')$ factors uniquely through $(\kappa,\lambda)$. 

Note that we have unique factorizations \smash{$K' 
\stackrel{\text{\raisebox{-1pt}{\smash{$\varphi$}}}}{\longrightarrow} K \stackrel{\text{\raisebox{-1pt}{\smash{$\kappa$}}}}{\longrightarrow} A$} of $\kappa'$ and \smash{$L' 
\stackrel{\text{\raisebox{-1pt}{\smash{$\psi$}}}}{\longrightarrow} L \stackrel{\text{\raisebox{-1pt}{\smash{$\lambda$}}}}{\longrightarrow} B$} of $\lambda'$ by the universal property of kernels since $\alpha\kappa'=0$ and $\beta\lambda'=0$. From these factorizations, the commutativity of \eqref{kernel}, and from the fact that $(\kappa',\lambda')$ is a morphism in $\com{U}{V}$, we get:
\begin{displaymath}
  V\lambda \circ \vartheta \circ U\varphi \,=\, \theta \circ U\kappa \circ U\varphi \,=\, \theta \circ U\kappa' \,=\, V\lambda' \circ \vartheta' \,=\,  V\lambda \circ V\psi \circ \vartheta'\;.
\end{displaymath}
As $V\lambda$ is mono we conclude that $\vartheta \circ U\varphi = V\psi \circ \vartheta'$, so $(\varphi,\psi) \colon (K',L',\vartheta') \to (K,L,\vartheta)$ is a morphism in $\com{U}{V}$ with $(\kappa,\lambda) \circ (\varphi,\psi) = (\kappa',\lambda')$, that is, $(\kappa',\lambda')$ factors through $(\kappa,\lambda)$. 

A similar argument shows that every morphism in $\com{U}{V}$ has a cokernel; this uses the assumed right exactness of $U$. As for kernels, cokernels are computed componentwise.

Next we show that every monomorphism $(\alpha,\beta) \colon (A,B,\theta) \to (A',B',\theta')$ in $\com{U}{V}$ is a kernel. We have just shown that $(\alpha,\beta)$ has a kernel, namely $(K,L,\vartheta)$ where $K$ is a kernel of $\alpha$ and $L$ is a kernel of $\beta$. Thus, if $(\alpha,\beta)$ is mono, then $(K,L,\vartheta)$ is forced to be zero, so $\alpha$ and $\beta$ must both be mono. Let \smash{$0 \longrightarrow A 
\stackrel{\text{\raisebox{-1pt}{\smash{$\alpha$}}}}{\longrightarrow} A' \stackrel{\text{\raisebox{-1pt}{\smash{$\rho$}}}}{\longrightarrow} C \longrightarrow 0$} and 
\smash{$0 \longrightarrow B 
\stackrel{\text{\raisebox{-1pt}{\smash{$\beta$}}}}{\longrightarrow} B' \stackrel{\text{\raisebox{-1pt}{\smash{$\sigma$}}}}{\longrightarrow} D \longrightarrow 0$} be short exact sequences in $\c{A}$ and $\c{B}$. From the componentwise constructions of kernels and cokernels in $\com{U}{V}$ given above, it follows that $(\rho,\sigma)$ is a morphism in $\com{U}{V}$ whose kernel is precisely the given monomorphism $(\alpha,\beta)$.

A similar argument shows that every epimorphism in $\com{U}{V}$ is a cokernel.
\end{proof}

\begin{dfn}
  \label{dfn:stalk}
  As for quiver representations, see \dfnref{Si}, there are \emph{stalk functors},
\begin{displaymath}
  \xymatrix{
    \c{A} \ar[r]^-{S_{\mspace{-5mu}\c{A}}} & \com{U}{V} & 
    \c{B} \ar[l]_-{S_{\mspace{-4mu}\c{B}}}
  },
\end{displaymath}   
defined by $S_{\mspace{-5mu}\c{A}} \colon A \longmapsto (A,0,UA \xrightarrow{\ 0 \ } V0)$ and $S_{\mspace{-4mu}\c{B}} \colon B \longmapsto (0,B,U0 \xrightarrow{\ 0 \ } VB)$.
\end{dfn}

We now describe the right adjoints of these stalk functors.

\enlargethispage{3ex}

\begin{lem}
  \label{lem:K}
  The following asertions hold.
  \begin{prt}
    \item The stalk functor $S_{\mspace{-4mu}\c{B}}$ has a right adjoint $K_{\c{B}} \colon \com{U}{V} \to \c{B}$ given by $(X,Y,\theta) \mapsto Y$.

  \item Assume that $U$ has a right adjoint $U^!$ and let $\eta$ be the unit of the adjunction. The~stalk  functor $S_{\mspace{-5mu}\c{A}}$ has a right adjoint $K_{\c{A}} \colon \com{U}{V} \to \c{A}$ given by $(X,Y,\theta) \mapsto \Ker{(U^!\theta \circ \eta_X)}$, i.e.~the kernel of the morphism \smash{$X
\stackrel{\text{\raisebox{-0pt}{\smash{$\eta_X$}}}}{\longrightarrow} U^!UX \stackrel{\text{\raisebox{-1pt}{\smash{$U^!\theta$}}}}{\longrightarrow} U^!UY$}.
  \end{prt}
  
  In particular, if an object $(X,Y,\theta)$ in $\com{U}{V}$ satisfies $K_{\c{A}}(X,Y,\theta)=0$ and $K_{\c{B}}(X,Y,\theta)=0$, then one has $(X,Y,\theta)=0$.
\end{lem}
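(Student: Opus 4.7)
The plan is to verify each of the two adjunctions by producing a natural bijection on Hom-sets directly from the definition of morphisms in $\com{U}{V}$, and then to read off the ``in particular'' assertion from the explicit formulas for $K_{\c{A}}$ and $K_{\c{B}}$.

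Part~(a) should be essentially free. I would unfold a morphism $S_{\mspace{-4mu}\c{B}}B = (0,B,0) \to (X,Y,\theta)$ as a pair $(\alpha,\beta)$, note that $\alpha\colon 0 \to X$ is forced to be zero, and observe that the required commuting square in $\c{C}$ collapses to $0=0$ automatically. The projection $(\alpha,\beta)\mapsto \beta$ then gives the natural bijection $\Hom{\com{U}{V}}{S_{\mspace{-4mu}\c{B}}B}{(X,Y,\theta)} \cong \Hom{\c{B}}{B}{Y}$.

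Part~(b) is the main content and the only place requiring real care. A morphism $S_{\mspace{-5mu}\c{A}}A = (A,0,0) \to (X,Y,\theta)$ unwinds to a morphism $\alpha\colon A\to X$ in $\c{A}$ subject to the single condition $\theta\circ U\alpha = 0$ in $\Hom{\c{C}}{UA}{VY}$. The key step is to transport this equation across the adjunction $(U,U^!)$: under the bijection $\Hom{\c{C}}{UA}{VY} \cong \Hom{\c{A}}{A}{U^!VY}$ the map $\theta\circ U\alpha$ corresponds to $U^!(\theta\circ U\alpha)\circ \eta_A$, and naturality of the unit $\eta$ rewrites this as
\begin{displaymath}
U^!\theta \circ U^!U\alpha \circ \eta_A \,=\, U^!\theta \circ \eta_X \circ \alpha \,=\, (U^!\theta\circ \eta_X)\circ \alpha.
\end{displaymath}
Thus $\theta\circ U\alpha = 0$ is equivalent to $(U^!\theta\circ \eta_X)\circ \alpha = 0$, and by the universal property of the kernel such $\alpha$ correspond bijectively to morphisms $A\to \Ker{(U^!\theta\circ\eta_X)} = K_{\c{A}}(X,Y,\theta)$. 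Naturality of the resulting bijection in both variables is automatic from naturality of $\eta$ and functoriality of $U^!$. This adjunction-plus-naturality manipulation is the only nontrivial step in the argument.

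The final assertion then follows at once from the explicit formulas. If $K_{\c{B}}(X,Y,\theta) = Y = 0$, then since $V$ is additive and so preserves the zero object one has $VY = 0$ and $\theta$ is the zero map $UX\to 0$; consequently $K_{\c{A}}(X,Y,\theta) = \Ker{(U^!0\circ \eta_X)} = \Ker{(0\colon X\to 0)} = X$, so the vanishing of $K_{\c{A}}(X,Y,\theta)$ forces $X=0$, giving $(X,Y,\theta) = 0$.
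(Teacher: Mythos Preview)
Your proposal is correct and follows essentially the same approach as the paper. The only difference is cosmetic: in part~(b) the paper verifies the biconditional $\theta\circ U\alpha=0 \iff (U^!\theta\circ\eta_X)\circ\alpha=0$ by proving the two implications separately (one via naturality of $\eta$, the other via the counit and the triangle identity), whereas you obtain it in one stroke by noting that the adjunction bijection $\Hom{\c{C}}{UA}{VY}\cong\Hom{\c{A}}{A}{U^!VY}$ sends $\theta\circ U\alpha$ to $(U^!\theta\circ\eta_X)\circ\alpha$ and is injective.
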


\begin{proof}
  \proofoftag{a} Let $B \in \c{B}$ and $(X,Y,\theta) \in \com{U}{V}$ be objects. It is immediate from \dfnref{stalk} that
  a morphism \mbox{$S_{\mspace{-4mu}\c{B}}(B) \to (X,Y,\theta)$} in $\com{U}{V}$ is the same as a morphism $\beta \colon B \to Y$ in $\c{B}$.

  \proofoftag{b} Write $\eta$ and $\varepsilon$ for the unit and counit of the assumed adjunction $(U,U^!)$. Let $A \in \c{A}$ and $(X,Y,\theta) \in \com{U}{V}$ be objects. It is immediate from \dfnref{stalk} that
  a morphism \mbox{$S_{\mspace{-5mu}\c{A}}(A) \to (X,Y,\theta)$} in $\com{U}{V}$ is the same as a morphism $\alpha \colon A \to X$ in $\c{A}$ such that the~composition $\theta \circ U\alpha \colon UA \to VY$ is zero. We claim that $\theta \circ U\alpha = 0$ if and only if $U^!\theta \circ \eta_X \circ \alpha = 0$. Indeed, the ``only if'' part follows directly from the identities
\begin{displaymath}
  U^!\theta \circ \eta_X \circ \alpha \,=\, U^!\theta \circ U^!U\alpha \circ \eta_A \,=\, U^!(\theta \circ U\alpha) \circ \eta_A\;,
\end{displaymath}  
where the first equality holds by naturality of $\eta$. The ``if'' part follows from the identities
\begin{displaymath}
  \theta \circ U\alpha \,=\, 
  \theta \circ \varepsilon_{UX} \circ U\eta_X \circ U\alpha \,=\,
  \varepsilon_{VY} \circ UU^!\theta \circ U\eta_X \circ U\alpha \,=\,  
  \varepsilon_{VY} \circ U(U^!\theta \circ \eta_X \circ \alpha)\;,
\end{displaymath} 
where the first equality is by the unit-counit relation \cite[IV.1 Thm.~1(ii)]{Mac} and the second is by naturality of $\varepsilon$. This is illustrated in the following diagram:
\begin{displaymath}
  \xymatrix{
    UA \ar[d]_-{U\alpha} \ar[r]^-{U\alpha} & UX \ar[r]^-{\theta} & VY
    \\
    UX \ar@{=}[ur] \ar[r]_-{U\eta_X} & 
    UU^!UX \ar[r]_-{UU^!\theta} \ar[u]_-{\varepsilon_{UX}}
    & UU^!VY\,. \ar[u]_-{\varepsilon_{VY}}
  }
\end{displaymath}
Therefore, a morphism $S_{\mspace{-5mu}\c{A}}(A) \to (X,Y,\theta)$ in $\com{U}{V}$ is the same as a morphism \mbox{$\alpha \colon A \to X$} in $\c{A}$ with $U^!\theta \circ \eta_X \circ \alpha = 0$, and by the universal property of the kernel, such morphisms are in bijective correspondance with morphisms $A \to \Ker{(U^!\theta \circ \eta_X)}$. This proves (b).

For the last statement, note that $K_{\c{B}}(X,Y,\theta)=0$ yields $Y=0$. Thus $\theta$ is the zero morphism $UX \to 0$ and consequently $U^!\theta \circ \eta_X$ is the zero morphism $X \to 0$. It follows that $X=K_{\c{A}}(X,Y,\theta)=0$, so $(X,Y,\theta)=0$ in $\com{U}{V}$.
\end{proof}

We are now in a position to show Theorem~B from the Introduction.

\begin{proof}[Proof of Theorem~B]
First notice that under the given assumptions, the comma category $\com{U}{V}$ is abelian by \prpref{abelian}, so it makes sense to consider its atom spectrum. We will apply \thmref{bijective} to the functors $S_{\mspace{-5mu}\c{A}}$ and $S_{\mspace{-4mu}\c{B}}$ from \dfnref{stalk} whose right adjoints are $K_{\c{A}}$ and $K_{\c{B}}$ from \lemref{K}. As shown in the proof of \prpref{abelian}, kernels and cokernels in $\com{U}{V}$ are computed componentwise, so the functor $S_{\mspace{-5mu}\c{A}}$ is exact. It also lifts subobjects as every subobject of $S_{\mspace{-5mu}\c{A}}(A)$ has the form $S_{\mspace{-5mu}\c{A}}(A')$ for a subobject \mbox{$A' \rightarrowtail A$} in $\c{A}$. It is clear from the definitions that the unit $\mathrm{id}_{\c{A}} \to K_{\c{A}}\mspace{1mu}S_{\mspace{-5mu}\c{A}}$ of the adjunction $(S_{\mspace{-5mu}\c{A}},K_{\c{A}})$ is an isomorphism, and hence $S_{\mspace{-5mu}\c{A}}$ is full and faithful by (the dual of) \cite[IV.3, Thm.~1]{Mac}. Similar arguments show that the functor $S_{\mspace{-4mu}\c{B}}$ has the same properties as those just established for~$S_{\mspace{-5mu}\c{A}}$. Therefore, the functors $S_{\mspace{-5mu}\c{A}}$ and $S_{\mspace{-4mu}\c{B}}$ meet the requirements in \prpref{f}.
 
 It remains to verify conditions (a) and (b) in \thmref{bijective}. However, condition (a) is straightforward from \dfnref{stalk}, and (b) holds by \lemref{K}. 
\end{proof}

\begin{exa}
  \label{exa:T-Mod}
  Let $A$ and $B$ be rings and let $M = {}_BM_A$ be a $(B,A)$-bimodule. We consider the comma category associated to $U= M \otimes_A - \colon \Mod{A} \to \Mod{B}$ and $V$ being the identity functor on $\Mod{B}$.
Theorem~B yields a homeomorphism and an order-isomorphism,
\begin{displaymath}  
  f \colon \ASpecp{\Mod{A}} \,\sqcup\, \ASpecp{\Mod{B}}
  \longrightarrow \ASpec{\com{(M \otimes_A -)}{(\Mod{B})}}\;,
\end{displaymath}    
which we now describe in more detail. There is a well-known equivalence of categories,
  \begin{displaymath}
    \com{(M \otimes_A -)}{(\Mod{B})} \stackrel{E}{\longrightarrow} \Mod{T}
    \qquad \textnormal{where} \qquad 
    T = 
    \begin{pmatrix}
    A & 0 \\
    M & B
    \end{pmatrix};
  \end{displaymath}
  see \cite{FGR-75} and \cite[Thm.~(0.2)]{Green}. Under this equivalence, an object $(X,Y,\theta)$ in the comma category is mapped to the left $T$-module whose underlying abelian group is $X \oplus Y$ where $T$-multiplication is defined by
\begin{displaymath}
  \begin{pmatrix}
    a & 0 \\
    m & b
  \end{pmatrix}\!\!
  \begin{pmatrix}
    x \\
    y
  \end{pmatrix}
  =
  \begin{pmatrix}
    ax \\
    \theta(m \otimes x)+by
  \end{pmatrix}  
  \qquad \textnormal{for} \qquad
    \begin{pmatrix}
    a & 0 \\
    m & b
  \end{pmatrix} \in T
  \ \ \textnormal{ and } \ \
  \begin{pmatrix}
    x \\
    y
  \end{pmatrix}
  \in
  {\renewcommand{\arraystretch}{0.7} 
  \begin{array}{c}
  X \\ \oplus \vspace{1.5pt} \\ Y
  \end{array}
  }.  
\end{displaymath}

For simplicity we now consider the case where $A$ and $B$ are commutative (but $T$ is not). Define a map $\tilde{f}$ by commutativity of the diagram
\begin{displaymath}
  \xymatrix{
    \Spec{A} \,\sqcup\, \Spec{B} \ar@{..}[r]\ar@{..}[r]\ar@{..}[r]\ar@{..>}[r]^-{\tilde{f}}
    \ar[d]_-{q_A\,\sqcup\,\, q_B}^-{\sim} & 
    \ASpecp{\Mod{T}} 
    \\
    \ASpecp{\Mod{A}} \,\sqcup\, \ASpecp{\Mod{B}} \ar[r]^-{f} & 
    \ASpec{\com{(M \otimes_A -)}{(\Mod{B})}}\;, \ar[u]_-{\ASpec{E}}^-{\sim}
  }
\end{displaymath} 
where $q_A$ and $q_B$ are the homeomorphisms and order-isomorphisms from \ref{Spec}. By using the definitions of these maps, it follows easily that
\begin{displaymath}
  \tilde{f}(\mathfrak{p}) \,=\, \aa
  {T\!\!\bigg/\!\!
  \begin{pmatrix}
    \mathfrak{p} & 0 \\
    M & B
  \end{pmatrix}}
  \qquad \textnormal{and} \qquad
  \tilde{f}(\mathfrak{q}) \,=\, \aa{T\!\!\bigg/\!\!
  \begin{pmatrix}
    A & 0 \\
    M & \mathfrak{q}
  \end{pmatrix}}
\end{displaymath}  
for $\mathfrak{p} \in \Spec{A}$ and $\mathfrak{q} \in \Spec{B}$. In the terminology of \cite[Def.~6.1]{Kanda-Serre} the denominators above are \emph{comonoform} left ideals in $T$. For $A=B=M=K$, a field, this recovers \cite[Exa.~8.3]{Kanda-Serre}\footnote{\,This example, which inspired the present paper, was worked out using methods different from what we have developed here. The approach in \cite[Exa.~8.3]{Kanda-Serre} is that one can write down \emph{all} ideals in a lower triangular matrix ring, see for example \cite[Prop.~(1.17)]{Lam-AFCINR}, and from this list it is possible to single out the comonoform ones.}. For $A=B=M=\Bbbk$, where $\Bbbk$ is any commutative ring, the conclusion above also follows from   \exaref{subspace} with $n=2$.
\end{exa}

\appendix

\section{Quivers with relations and their representations}
\label{app:Quiver}

In this appendix, we present some (more or less standard) background material on representations of quivers with relations that we will need, and take for granted, in \secref{Application-quiver}.

\begin{ipg}
  \label{app-quiver}
A \emph{quiver} is a directed graph. For a quiver $Q$ we denote by $Q_0$ the set of vertices and~by $Q_1$ the set of arrows in $Q$. Unless otherwise specified there are no restrictions on a quiver; it may have infinitely many vertices, it may have loops and/or~oriented cycles, and there may be infinitely many or no arrows from one vertex to another.

 For an arrow \mbox{$a \colon\! i \to j$} in $Q$ the vertex $i$, respectively, $j$, is called the \emph{source}, respectively, \emph{target}, of $a$. A \emph{path} $p$ in $Q$ is a finite sequence of composable arrows \smash{$\bullet \stackrel{\text{\raisebox{1ex}{$a_1$}}}{\smash{\longrightarrow}} \bullet \stackrel{\text{\raisebox{1ex}{$a_2$}}}{\smash{\longrightarrow}} \cdots \stackrel{\text{\raisebox{1ex}{$a_n$}}}{\smash{\longrightarrow}} \bullet$} (that is, the target of $a_\ell$ equals the source of $a_{\ell+1}$), which we write $p=a_n\cdots a_2a_1$. If $p$~and~$q$ are paths in $Q$ and the target of $q$ coincides with the source of $p$, then we write $pq$ for the composite path (i.e.~first $q$, then $p$). At each vertex $i \in Q_0$ there is by definition a \emph{trivial path}, denoted by $e_i$, whose source and target are both $i$. For every path $p$ in $Q$ with source~$i$ and target $j$ one has $pe_i=p=e_{\!j}p$.

  Let $Q$ be a quiver and let $\c{A}$ be an abelian category. One can view $Q$ as a category, which we denote by $\bar{Q}$, whose objects are vertices in $Q$ and whose morphisms are paths in $Q$. An \emph{$\c{A}$-valued representation of $Q$} is a functor $X \colon \bar{Q} \to \c{A}$ and a morphism $\lambda \colon X \to Y$ of representations $X$ and $Y$ is a natural transformation. The category of $\c{A}$-valued representations of $Q$, that is, the category of functors $\bar{Q} \to \c{A}$, is written $\Rep{Q}{\c{A}}$. In symbols:
\begin{equation}
  \label{eq:Rep-def}
  \Rep{Q}{\c{A}} \,=\, \mathrm{Func}(\bar{Q},\c{A}).
\end{equation} 
  It is an abelian category where kernels and cokernels 
are computed vertexwise.
\end{ipg}  

\begin{ipg}
  \label{app-k-linear}
  Let $\Bbbk$ be a commutative ring. Recall that a \emph{$\Bbbk$-linear category} is a category $\c{K}$ enriched in the monoidal category $\Mod{\Bbbk}$ of $\Bbbk$-modules, that is, the hom-sets in $\c{K}$ have structures of $\Bbbk$-modules and composition in $\c{K}$ is $\Bbbk$-bilinear. If $\c{K}$ and $\c{L}$ are $\Bbbk$-linear categories, then we write $\mathrm{Func}_{\Bbbk}(\c{K},\c{L})$ for the category of $\Bbbk$-linear functors from $\c{K}$ to $\c{L}$. Here we must require that $\c{K}$ is skeletally small in order for $\mathrm{Func}_{\Bbbk}(\c{K},\c{L})$ to have small hom-sets.
  
    If $\c{C}$ is any category we write $\Bbbk\c{C}$ for the category whose objects are the same as those in $\c{C}$ and where $\Hom{\Bbbk\c{C}}{X}{Y}$ is the free $\Bbbk$-module on the set $\Hom{\c{C}}{X}{Y}$. Composition in $\Bbbk\c{C}$ is induced by composition in $\c{C}$. The category $\Bbbk\c{C}$ is evidently $\Bbbk$-linear and we call it the \emph{$\Bbbk$-linearization} of $\c{C}$. Note that there is canonical functor $\c{C} \to \Bbbk\c{C}$. For any skeletally small category $\c{C}$ and any $\Bbbk$-linear category $\c{L}$ there is an equivalence of categories,
\begin{equation}
  \label{eq:Func}
  \mathrm{Func}(\c{C},\c{L}) \,\simeq\, \mathrm{Func}_{\Bbbk}(\Bbbk\c{C},\c{L})\;.
\end{equation} 
That is, (ordinary) functors $\c{C} \to \c{L}$ corrspond to $\Bbbk$-linear functors $\Bbbk\c{C} \to \c{L}$. This equivalence maps a functor $F \colon \c{C} \to \c{L}$ to the $\Bbbk$-linear functor $\tilde{F} \colon \Bbbk\c{C} \to \c{L}$ given by $\tilde{F}(C) = F(C)$ for any object $C$ and $\tilde{F}(x_1\varphi_1+\cdots+x_m\varphi_m) = x_1F(\varphi_1)+\cdots+x_mF(\varphi_m)$ for any morphism $x_1\varphi_1+\cdots+x_m\varphi_m$ in $\Bbbk\c{C}$ (where $x_u \in \Bbbk$ and $\varphi_1,\ldots,\varphi_m \colon C \to C'$ are morphisms in $\c{C}$). In the other direction, \eqref{Func} maps
a $\Bbbk$-linear functor $\Bbbk\c{C} \to \c{L}$ to the composition
$\c{C} \to \Bbbk\c{C} \to \c{L}$.

A \emph{two-sided ideal} $\c{I}$ in a $\Bbbk$-linear category $\c{K}$ is a collection of $\Bbbk$-submodules $\c{I}(X,Y) \subseteq \Hom{\c{K}}{X}{Y}$, indexed by pairs $(X,Y)$ of objects in $\c{K}$, such that 
\begin{itemlist}
\item For every $\beta \in \Hom{\c{K}}{Y}{Y'}$ and $\varphi \in \c{I}(X,Y)$ one has $\beta \varphi \in \c{I}(X,Y')$, and
\item For every $\varphi \in \c{I}(X,Y)$ and $\alpha \in \Hom{\c{K}}{X'}{X}$ one has $\varphi\alpha \in \c{I}(X',Y)$.
\end{itemlist}
Given such an ideal $\c{I}$ in $\c{K}$ one can define the quotient category $\c{K}/\c{I}$, which has the same objects as $\c{K}$ and hom-sets defined by (quotient of $\Bbbk$-modules):
\begin{displaymath}
  \Hom{\c{K}/\c{I}}{X}{Y} \,=\, \Hom{\c{K}}{X}{Y}/\c{I}(X,Y)\;.
\end{displaymath} 
Composition in $\c{K}/\c{I}$ is induced from composition in $\c{K}$, and it is well-defined since $\c{I}$ is a two-sided ideal. It is straightforward to verify the $\c{K}/\c{I}$ is a $\Bbbk$-linear category. There is a canonical $\Bbbk$-linear functor $\c{K} \to \c{K}/\c{I}$, which for any $\Bbbk$-linear category $\c{L}$ induces a functor $\mathrm{Func}_{\Bbbk}(\c{K}/\c{I},\c{L}) \to \mathrm{Func}_{\Bbbk}(\c{K},\c{L})$. It is not hard to see that this functor is fully faithful, so $\mathrm{Func}_{\Bbbk}(\c{K}/\c{I},\c{L})$ may be identified with a full subcategory of $\mathrm{Func}_{\Bbbk}(\c{K},\c{L})$. In fact, one has
\begin{displaymath}
  \mathrm{Func}_{\Bbbk}(\c{K}/\c{I},\c{L}) \,\simeq\,
  \{F \in \mathrm{Func}_{\Bbbk}(\c{K},\c{L}) \mspace{3mu}|\mspace{3mu} \textnormal{$F$ kills $\c{I}$} \mspace{2mu} \}\;.
\end{displaymath}

If $\c{R}$ is a collection of morphisms in a $\Bbbk$-linear category $\c{K}$, then we write $(\c{R})$ for the two-sided ideal in $\c{K}$ generated by $\c{R}$. I.e.~$(\c{R})(X,Y)$ consists of finite sums $\sum_{u} x_u\mspace{2mu}\beta_u \varphi_u \alpha_u$ where $x_u \in \Bbbk$ and $\alpha_u \colon X \to X_u$,\, $\varphi_u \colon X_u \to Y_u$,\, $\beta_u \colon Y_u \to Y$ are morphisms in $\c{K}$ with $\varphi_u \in \c{R}$.
\end{ipg}

\begin{ipg}
  \label{app-relations}
Let $Q$ be a quiver and let $\Bbbk$ be a commutative ring. Consider the $\Bbbk$-linear category~$\Bbbk \bar{Q}$, that is, the $\Bbbk$-linearization (see \ref{app-k-linear}) of the category $\bar{Q}$ (see \ref{app-quiver}). 

A \emph{relation} (more precisely, a \emph{$\Bbbk$-linear relation})  in $Q$ is a morphism $\rho$ in $\Bbbk \bar{Q}$. That is, 
$\rho$ is a formal $\Bbbk$-linear combination $\rho = x_1p_1+\cdots+x_mp_m$ ($x_u \in \Bbbk$) of paths $p_1,\ldots,p_m$ in $Q$ with a common source and a common taget. 

A \emph{quiver with relations} is a pair $(Q,\c{R})$ with $Q$ a quiver and $\c{R}$ a set of relations in $Q$.

  Let $\c{A}$ be a $\Bbbk$-linear abelian category. For a representation $X \in \Rep{Q}{\c{A}}$, as in \ref{app-quiver}, and a relation $\rho=x_1p_1+\cdots+x_mp_m$ in $Q$, define $X(\rho):=x_1X(p_1)+\cdots+x_mX(p_m)$. One says that $X$ \emph{satisfies} the relation $\rho$ if $X(\rho)=0$.
  
  If $(Q,\c{R})$ is a quiver with relations, then an \emph{$\c{A}$-valued representation of $(Q,\c{R})$}~is~a~repre\-sentation $X \in \Rep{Q}{\c{A}}$ with $X(\rho)=0$ for all $\rho \in \c{R}$,  that is, $X$ satisfies all relations~in~$\c{R}$. Write $\Rep{(Q,\c{R})}{\c{A}}$ for the category of $\c{A}$-valued representations of $(Q,\c{R})$. In symbols:
\begin{displaymath}
  \Rep{(Q,\c{R})}{\c{A}} \,=\, \{X \in \Rep{Q}{\c{A}} \mspace{3mu}|\mspace{3mu} X(\rho)=0 \textnormal{ for all } \rho \in \c{R} \}\;.
\end{displaymath}  
We consider $\Rep{(Q,\c{R})}{\c{A}}$ as a full subcategory of $\Rep{Q}{\c{A}}$.
 We have a diagram:
\begin{displaymath}
  \xymatrix{
    \Rep{Q}{\c{A}} \ar[r]^-{\simeq} &
    \mathrm{Func}_{\Bbbk}(\Bbbk\bar{Q},\c{A})
    \\
    \Rep{(Q,\c{R})}{\c{A}} \ar@{>->}[u]
    \ar@{..}[r]\ar@{..}[r]\ar@{..}[r]\ar@{..>}[r]^-{\simeq}
    &
    \mathrm{Func}_{\Bbbk}(\Bbbk\bar{Q}/(\c{R}),\c{A})\;,
    \ar@{>->}[u]
  }
\end{displaymath}
where the upper horizontal equivalence comes from \eqref{Rep-def} and \eqref{Func}. The vertical functors are inclusions. It is immediate from the definitions that the equivalence in the top row restricts to an equivalence in the bottom row, so we get commutativity of the displayed diagram.
\end{ipg}

\begin{ipg}
  \label{app-path-algebra}
Let $Q$ be a quiver with finitely many vertices(!) and let $\Bbbk$ be a commutative ring. The \emph{path algebra} $\Bbbk Q$ is the $\Bbbk$-al\-ge\-bra whose underlying $\Bbbk$-module is free with basis all paths in $Q$ and multiplication of paths $p$ and $q$ are given by their composition $pq$, as in \ref{app-quiver}, if they are composable, and $pq=0$ if they are not composable. Note that $\Bbbk Q$ has unit $\sum_{i \in Q_0}e_i$.
  
  There is an equivalence of categories, see e.g.~\cite[Lem.~p.~6]{CBquiver} or \cite[Chap.~III.1 Thm.~1.6]{ASS1}:
  \begin{equation}
    \label{eq:UV}
      \Rep{Q}{\Mod{\Bbbk}} \,\simeq \Mod{\Bbbk Q}\;.
  \end{equation}  
  We describe the quasi-inverse functors $U$ and $V$ that give this equivalence. A representation $X$ is mapped to the left $\Bbbk Q$-module $UX$ whose underlying $\Bbbk$-mo\-dule is $\bigoplus_{i \in Q_0}X(i)$; multiplication by paths works as follows: Let \mbox{$\varepsilon_i \colon X(i) \rightarrowtail \bigoplus_{i \in Q_0}X(i)$} and \mbox{$\pi_i \colon \bigoplus_{i \in Q_0}X(i) \twoheadrightarrow X(i)$} be the \smash{$i^\mathrm{th}$} injection and projection in $\Mod{\Bbbk}$. For a path $p \colon \mspace{-2mu}i \rightsquigarrow \mspace{-2mu}j$ and an element $z \in UX$ one has $pz=(\varepsilon_{\mspace{-2mu}j} \circ X(p) \circ \pi_i)(z)$. In the other direction, a left $\Bbbk Q$-module $M$ is mapped to the representation $VM$ given by $(VM)(i)=e_iM$ for $i \in Q_0$. For a path $p \colon \mspace{-2mu}i \rightsquigarrow \mspace{-2mu}j$ in $Q$ the $\Bbbk$-homomorphism $(VM)(p) \colon e_iM \to e_{\mspace{-2mu}j}M$ is left multiplication by $p$.
 
By definition, see \ref{app-relations}, a relation in $Q$ can be viewed as an element (of a special kind) in the algebra $\Bbbk Q$. If $(Q,\c{R})$ is a quiver with relations and $I=(\c{R})$ is the two-sided ideal in $\Bbbk Q$ generated by the subset $\c{R} \subseteq \Bbbk Q$, then we have a diagram:
\begin{displaymath}
  \xymatrix{
    \Rep{Q}{\Mod{\Bbbk}} \ar[r]^-{\simeq} &
    \Mod{\Bbbk Q}
    \\
    \Rep{(Q,\c{R})}{\Mod{\Bbbk}} \ar@{>->}[u]
    \ar@{..}[r]\ar@{..}[r]\ar@{..}[r]\ar@{..>}[r]^-{\simeq}
    &
    \Mod{\Bbbk Q/I}\;,
    \ar@{>->}[u]
  }
\end{displaymath}
where the upper horizontal equivalence is \eqref{UV}. The vertical functors are inclusions, where $\Mod{\Bbbk Q/I}$ is identified with the full subcategory $\{M \in \Mod{\Bbbk Q} \mspace{3mu}|\mspace{3mu} IM=0 \}$ of $\Mod{\Bbbk Q}$. It is immediate from the definitions that the equivalence in the top row restricts to an equivalence in the bottom row, so we get commutativity of the displayed diagram.
\end{ipg}

\section*{Acknowledgement}

We thank Ryo Kanda for fruitful and interesting discussions about this work. We are grateful to the anonymous referees for valuable comments that substantially improved the manuscript and corrected some mistakes.

\def\cprime{$'$}
  \providecommand{\arxiv}[2][AC]{\mbox{\href{http://arxiv.org/abs/#2}{\sf
  arXiv:#2 [math.#1]}}}
  \providecommand{\oldarxiv}[2][AC]{\mbox{\href{http://arxiv.org/abs/math/#2}{\sf
  arXiv:math/#2
  [math.#1]}}}\providecommand{\MR}[1]{\mbox{\href{http://www.ams.org/mathscinet-getitem?mr=#1}{#1}}}
  \renewcommand{\MR}[1]{\mbox{\href{http://www.ams.org/mathscinet-getitem?mr=#1}{#1}}}
\providecommand{\bysame}{\leavevmode\hbox to3em{\hrulefill}\thinspace}
\providecommand{\MR}{\relax\ifhmode\unskip\space\fi MR }
\providecommand{\MRhref}[2]{%
  \href{http://www.ams.org/mathscinet-getitem?mr=#1}{#2}
}
\providecommand{\href}[2]{#2}

\end{document}